\theoremstyle{plain}
\newtheorem{thm}{Theorem}[section]
\newtheorem{prop}[thm]{Proposition}
\newtheorem{cor}[thm]{Corollary}
\newtheorem{ques}[thm]{Question}
\newtheorem{lem}[thm]{Lemma}
\theoremstyle{remark}
\newtheorem{rem}[thm]{Remark}
\theoremstyle{definition}
\newtheorem{defn}[thm]{Definition}
\newtheorem{ex}[thm]{Example}
\newcommand\CC{{\mathbf C}}
\newcommand\NN{{\mathbf N}}
\newcommand\NC{\operatorname{NC}}
\newcommand\TT{{\mathbf T}}
\newcommand\ZZ{{\mathbf Z}}
\newcommand{\bub}[1]{\mathring{#1}}
\newcommand{\myref}[2]{\hyperref[#2]{#1~\ref*{#2}}}
\newcommand{\myrefsplit}[2]{\hyperref[#2]{#1 \ref*{#2}}}
\newcommand{\AND}{\text{and}}
\newcommand{\Alg}{\operatorname{Alg}}
\newcommand\At{{\widetilde A}}
\newcommand\Et{{\widetilde E}}
\newcommand{\lspan}{\operatorname{span}}
\newcommand{\OR}{\text{or}}
\newcommand\ut{{\tilde u}}
\newcommand\xt{{\tilde x}}
\begin{document}

\title[Haar unitaries and bipolar decompositions]{On operator valued Haar unitaries and bipolar decompositions of R-diagonal elements}

\author[Dykema]{Ken Dykema}
 \address{K. Dykema, Department of Mathematics, Texas A\&M University, College Station, TX 77843-3368, USA.}
 \email{kdykema@math.tamu.edu}
%

\author[Griffin]{John Griffin}
\address{J. Griffin}
\thanks{Portions of this work are included in the dissertation of J.\ Griffin for partial fulfillment of the requirements to obtain a Ph.D.\ degree
at Texas A\&M University.}
\email{jagriffin3rd@gmail.com}

\subjclass{46L54}


\begin{abstract} 
In the context of operator valued W$^*$-free probability theory, we study Haar unitaries, R-diagonal elements and circular elements.
Several classes of Haar unitaries are differentiated from each other.
The term bipolar decomposition is used for the expression of 
an element as $vx$ where $x$ is self-adjoint and $v$ is a partial isometry,
and we study such decompositions of operator valued R-diagonal and circular elements that are free, meaning that $v$ and $x$ are $*$-free from each other.
In particular, we prove, when $B=\CC^2$, that if a $B$-valued circular element has a free bipolar decomposition with $v$ unitary,
then it has one where $v$ normalizes $B$.
\end{abstract}

\date{17 January, 2024}

\maketitle

\section{Introduction}

  Consider a tracial W$^*$-noncommutative probability space $(A,\tau)$, namely a von Neumann algebra $A$, with a normal faithful tracial state $\tau$.
Voiculescu's circular operator, introduced in~\cite{V91}, arises naturally in free probability theory.
Voiculescu proved~\cite{V-paper} that a circular operator has polar decomposition $z=u|z|$, where $u$ is a Haar unitary, namely, $\tau(u^n)=0$
for all integers $n\ge1$ and where $u$ and $|z|$ are $*$-free.

R-diagonal elements were introduced by Nica and Speicher~\cite{NS-paper}.
They are defined by conditions involving free cumulants, but it turns out that, in the tracial setting, they are the operators of the form $uh$
where $u$ is a Haar unitary, $h\ge0$ and $u$ and $h$ are free;
for example, the circular operator and the Haar unitary are R-diagonal elements.

Operator-valued free probability theory, introduced by Voiculescu~\cite{V85}, is (in this context of W$^*$-algebras) the generalizeation wherein
the scalar field $\CC\subseteq A$ is replaced by a W$^*$-subalgebra $B\subseteq A$ and where $\tau$ is replaced by a normal conditional expectation $E:A\to B$.
The elements of $A$ are called $B$-valued random variables.
We will frequently be interested in such pairs $(A,E)$ when for some normal, faithful tracial state $\tau_B$ on $B$,
the composition $\tau_B\circ E$ is a trace on $A$.
We will say that such $B$-valued W$^*$-noncommutative probability spaces $(A,E)$ are tracial, and refer to random variables $a\in A$ as tracial.
$B$-valued R-diagonal elements, defined in terms of $B$-valued cumulants, were introduced and studied by {\'S}niady and Speicher in~\cite{SS-paper}
and have also been studied in~\cite{BD-paper}.
(See \myref{Section}{sec:background} for more details.)

We would like to understand, for $B$-valued R-diagonal elements $a$ that are tracial, more about the polar decomposition $a=u|a|$ of $a$,
in the case when $u$ is unitary.
Which sort of unitaries $u$ can appear?
Must $u$ and $|a|$ be $*$-free over $B$?  (We would then say that $a$ has a free polar decomposition.)
We know (by Example~6.8 of~\cite{BD-paper}) that the second question has a negative answer.

In \myref{Section}{section:unitaries}, we consider several classes of Haar unitaries and give examples to differentiate these classes.
As it turns out, the unitaries $u$ that can appear as the polar parts of $B$-valued R-diagonal elements are precisely the $B$-valued R-diagonal unitaries
(see \myref{Proposition}{prop:polar are r-diagonal}).
One example of a $B$-valued R-diagonal unitary is a Haar unitary $u$ that normalizes $B$, in the sense that for all $b\in B$ we have $ubu^*=\theta(b)$,
for some automorphism $\theta$ of $B$.
However, there are others (see \myref{Example}{ex:RdiagNotNormalizing}).

We are at present unable to find natural criteria for a $B$-valued R-diagonal element $a\in A$ to have a free polar decomposition.
However, we introduce (in \myref{Section}{sec:bipolar}) a related and slightly looser notion of a {\em bipolar decomposition}, for which such criteria exist. 
A bipolar decomposition for a $B$-valued random variable $a\in A$ is a pair $(v,x)$ (possibly in some other $B$-valued W$^*$-noncommutative probability space)
so that $a$ and $vx$ have the same $*$-moments, where $x$ is self-adjoint, $v$ is a partial isometry and $v^*v$ is greater than or equal to the support projection of $x$.
We are interested in bipolar decompositions that are {\em free} in the sense that $v$ and $x$ are $*$-free, and where $v$ is unitary.
In~\cite{BD-paper}, some natural conditions in terms of cumulants (or of moments) of a $B$-valued R-diagonal element $a\in A$ were derived, that are equivalent to
$a$ having a free bipolar decomposition $(u,x)$ with $u$ a unitary that normalizes $B$.
It is an interesting question: what other sorts of unitaries $u$ can appear in free bipolar decompositions of $B$-valued R-diagonal random variables?

The $B$-valued circular elements are very special cases of $B$-valued R-diagonal elements, namely, where the only nonvanishing cumulants are of order two.
(See \myref{Section}{sec:background} for details.)
It is not the case that every tracial $B$-valued circular element has a free bipolar decomposition.
Example~6.8 of~\cite{BD-paper} provides a counter-example when $B=\CC^2$.
In an attempt to understand more about unitaries that can arise in free bipolar decompositions of R-diagonal elements,
and hoping to find some more exotic examples of such unitaries,
we focused on the case of $B$-valued circular variables where $B=\CC^2$.
In \myref{Section}{section:c2}, we investigate the tracial $\CC^2$-valued circular elements that do have free bipolar decompositions of the form $(u,x)$ with $u$ unitary,
and we show that all of them do have free bipolar decompositions of this form where $u$ normalizes $B$.
The proof depends on calculations involving polynomials that were performed in Mathematica~\cite{Mat}, and we make available the Mathematica Notebook file in which
these calculations were performed~\cite{DG}.
While we don't obtain any exotic unitaries, it raises the question of whether this situation holds for $B$-valued circular elements for all $B$, or whether it is an artifact of the
low dimension of $\CC^2$.
This question and one other open question  are found in \myref{Section}{sec:qns}.

\section{Background and Notation}
\label{sec:background}

In this section, we recall some definitions and results that we need and fix notation we will use.

\begin{defn}
Let $B$ be a unital $*$-algebra over the complex numbers.
A \textit{$B$-valued $*$-noncommutative probability space} is a pair
$(A,E)$, where $A$ is a unital $*$-algebra containing a unitally embedded copy
of $B$ and $E:A\to B$ is a positive, $*$-preserving, idempotent linear function that restricts
to the identity on $B$ and satisfies $E(b_1ab_2)=b_1E(a)b_2$ for every
$a\in A$ and $b_1,b_2\in B$.
If $B$ and $A$ are both C$^*$-algebras,
then we say $(A,E)$ is a
\textit{$B$-valued C$^*$-noncommutative probability space}, and $E$ is automatically of norm $1$.
If $B$ and $A$ are both von Neuamm algebras and $E$ is normal, 
then we call $(A,E)$ a
\textit{$B$-valued W$^*$-noncommutative probability space}.

The mapping $E$ is called a \textit{conditional expectation}.
Note that positivity of $E$ 
Elements of $A$ are called \textit{$B$-valued random variables}, or simply
\textit{random variables}.
\end{defn}

\begin{defn}
Let $a\in A$ be a $B$-valued random variable in the $B$-valued $W^*$-noncommutative probability space $(A,E)$.
We say that $a$ (or the $*$-distribution of $a$) is {\em tracial} if there exists a normal, faithful, tracial state $\tau_B$ on $B$ such that
the restriction of $\tau_B\circ E$ to the $*$-algebra generated by $a$ and $B$ is a trace.
\end{defn}

Henceforth we fix a $B$-valued $*$-noncommutative probability space $(A,E)$
over some unital $*$-algebra $B$.

The $B$-valued probabilistic information about a random variable is captured by
its $B$-valued $*$-moments and $B$-valued $*$-distribution.

\begin{defn}
A \textit{$B$-valued moment} of a family $(a_i)_{i\in I}$ of random variables is
an expression of the form
\[
   E(a_{\varepsilon(1)}b_1\cdots a_{\varepsilon(n-1)}b_{n-1}a_{\varepsilon(n)}),
\]
where $n\in\NN$, $b_1,\ldots,b_{n-1}\in B$, and $\varepsilon\in I^n$.
We typically consider a \textit{$B$-valued $*$-moment} of a single random
variable $a$, which is a $B$-valued moment of the pair $(a,a^*)$.

Let $B\langle (X_i)_{i\in I}\rangle$ be the universal algebra over $\CC$
generated by $B$ and the indeterminates $(X_i)_{i\in I}$.
The \textit{$B$-valued distribution} of a family $(a_i)_{i\in I}$
is the mapping $\Theta:B\langle (X_i)_{i\in I}\rangle \to B$
defined by extending
\[
   \Theta(b_0X_{\varepsilon(1)}b_1\cdots X_{\varepsilon(n)}b_n) 
   = E(b_0a_{\varepsilon(1)}b_1\cdots a_{\varepsilon(n)}b_n)
\]
linearly.
When we refer to the \textit{$B$-valued $*$-distribution} of a single random
variable $a$, we mean the $B$-valued distribution of the pair $(a,a^*)$.
\end{defn}

Here is Voiculescu's notion of free indpendence in the $B$-valued setting.
\begin{defn}
Let $(A_i)_{i\in I}$ be a family of $*$-subalgebras of $A$, such that each $A_i$
containes the copy of $B$ that is embedded in $A$.
This family of subalgebras is \textit{free over $B$} if
$E(a_1\cdots a_n) = 0$
whenever:
\begin{enumerate}[label=(\alph*)]
\item $n$ is a positive integer;
\item $\varepsilon\in I^n$ so that $a_i\in A_{\varepsilon(i)}$ for each
      $1\le i \le n$;
\item Each $a_i$ is centered, namely, $E(a_i)=0$ for all $1 \le i \le n$;
\item Neighboring elements are from different subalgebras, meaning, 
      $\varepsilon(1)\ne\varepsilon(2),\varepsilon(2)\ne\varepsilon(3),
         \ldots,\varepsilon(n-1)\ne\varepsilon(n)$.
\end{enumerate}
A family of subsets $(X_i)_{i\in I}$ is \textit{$*$-free over $B$} if
the corresponding family of $*$-subalgebras $($*$-\Alg(B\cup X_i))_{i\in I}$
is $*$-free.
A family $(a_i)_{i\in I}$ of random variables $a_i\in A$ is {\em free over $B$} or, respectively, {\em $*$-free over $B$}
if the family $(\{a_i\})_{i\in I}$ is free over $B$ or, respectively, $*$-free over $B$.
\end{defn}

Of course, the definition of $*$-freeness depends on the conditional
expectation $E$.
If there is any ambiguity in which conditional expectation our discussion of
$*$-freeness over $B$ concerns, we'll replace ``over $B$"
with the terminology
``with respect to E".
If $B=\CC$, then $*$-freeness over $B$ is the same as $*$-freeness in the
scalar setting.

We present a formulation from \cite{NSS-paper} of
the $B$-valued cumulant maps
that were introduced by Speicher~\cite{S-paper}.
\begin{defn}
Suppose $(a_i)_{i\in I}$ is a family of $B$-valued random variables.
Write $J=\bigcup_{n\in\NN}I^n$.
Given $n\in\NN$ and $j\in I^n$, the corresponding cumulant
map $\alpha_j:B^{n-1}\to B$ is defined recursively by the
\textit{$B$-valued moment-cumulant formula}
\begin{equation}
   E(a_{j(1)}b_1a_{j(2)}\cdots b_{n-1}a_{j(n)})
   = \sum_{\pi\in\NC(n)}\hat{\alpha}_j(\pi)[b_1,\ldots,b_{n-1}],
\end{equation}
where $\NC(n)$ is the set of all noncrossing partitions of $\{1,\ldots,n\}$
and, given $\pi\in\NC(n)$, $\hat{\alpha}_j(\pi)$ is a multilinear map defined
in terms of $\alpha_{j'}$ for each $j'$ obtained by restricting $j$ to a
block of $\pi$.
More precisely, if $\pi=1_n$, then
\[
   \hat{\alpha}_j(\pi)[b_1,\ldots,b_{n-1}]=\alpha_j(b_1,\ldots,b_{n-1}),
\]
and for $\pi\ne 1_n$, we choose an interval block $\{p,p+1,\ldots,p+q-1\}\in\pi$
with $p\ge 1$ and $q\ge 1$, and let $\pi'\in\NC(n-q)$ be obtained by restricting
$\pi$ to $\{1,\ldots,p-1\}\cup\{p+q,\ldots,n\}$ and then renumbering to
preserve order.
Then, with $j'=(j(1),\ldots,j(p-1),j(p+q),\ldots,j(n))\in I^{n-q}$ and
$j''=(j(p),\ldots,j(p+q-1))\in I^q$, we have
\begin{align*}
   \hat{\alpha}_j&(\pi)[b_1,\ldots,b_{n-1}] \\
   &= \begin{cases}
      \hat{\alpha}_{j'}(\pi')[b_1,\ldots,b_{p-2}, \\
         \qquad\qquad b_{p-1}\alpha_{j''}(b_p,\ldots,b_{p+q-2})b_{p+q-1}, \\
         \qquad\qquad b_{p+q},\ldots,b_{n-1}],
         & p\ge 2,\, p+q-1<n, \\
      \hat{\alpha}_{j'}(\pi')[b_1,\ldots,b_{p-2}]b_{p-1}
         \alpha_{j''}(b_p,\ldots,b_{n-1}),
         & p\ge 2,\, p+q-1 = n, \\
      \alpha_{j''}(b_1,\ldots,b_{q-1})b_q
         \hat{\alpha}_{j'}(\pi')[b_{q+1},\ldots,b_{n-1}],
         & p=1,\, q<n.
   \end{cases}
\end{align*}

We call the collection of maps $(\alpha_j)_{j\in J}$ the
\textit{$B$-valued cumulants} of $(a_i)_{i\in I}$.
The \textit{$B$-valued $*$-cumulants} of a random variable $a$ are the
$B$-valued cumulants corresponding to the pair $(a,a^*)$.
\end{defn}

R-diagonal elements were introduced in the scalar-valued case in~\cite{NS-paper}
and in the more general $B$-valued case in~\cite{SS-paper}.
The equivalent definition below is found in~\cite{BD-paper}, and was inspired by results of~\cite{NSS01}.
Let $B$ be a unital $*$-algebra and $(A,E)$ be a $B$-valued $*$-noncommutative
probability space.

\begin{defn}
Given $n\in\NN$ and $\varepsilon\in\{1,*\}^n$, the \textit{maximal alternating
interval partition} $\sigma(\varepsilon)$ is the interval partition of
$\{1,\ldots,n\}$ whose blocks $V$ are the maximal interval subsets of
$\{1,\ldots,n\}$ such that if $j\in V$ and $j+1\in V$, then
$\varepsilon(j)\ne\varepsilon(j+1)$.

\label{defn:R-diagonal}
A $B$-valued random variable $a$ is said to be \textit{$B$-valued R-diagonal}
if for every $k\ge 0$ and $b_1,\ldots,b_{2k}\in B$ we have
\[
   E(ab_1a^*b_2ab_3a^*\cdots b_{2k-2}ab_{2k-1}a^*b_{2k}a) = 0,
\]
(i.e., odd alternating moments vanish) and, for $n\ge 1$,
$\varepsilon\in\{1,*\}^n$, and $b_1,b_2,\ldots,b_n\in B$ we have
\[
   E\left(\prod_{V\in\sigma(\varepsilon)}
      \left(\left(\prod_{j\in V}a^{\varepsilon(j)}b_j\right)
         -E\left(\prod_{j\in V}a^{\varepsilon(j)}b_j\right)
      \right)\right) = 0,
\]
where the terms in each of the three products are taken in the order of
increasing indices.
\end{defn}

Before listing some characterizations of R-diagonal random variables, 
we'll need some more notation.

\begin{defn}\label{def:HaarU}
A unitary $u\in A$ is called a \textit{Haar unitary} if $E(u^k)=0$ for all  nonzero integers
$k$.
\end{defn}

The name arises from the case $B=\CC$, when the condition is equivalent to the distribution measure of $u$ being Haar measure on the circle.
In fact, when $B=\CC$, this condition is enough to determine the
$*$-distribution of $u$, and implies that $u$ is R-diagonal.
However, in the $B$-valued setting, a Haar unitary need not be R-diagonal.
This is explored further in \myref{Section}{section:unitaries}.

\begin{defn}
A $B$-valued $*$-noncommutative probability space $(A',E')$ is said to be
an \nobreak \textit{enlargement} of $(A,E)$ if there is an embedding
$\theta:A\to A'$ so that $\theta(b)=b$ for each $b\in B$ and
$E'(\theta(a))=E(a)$ for each $a\in A$.
\end{defn}

The following theorem, which appears in \cite{BD-paper} and parts of which
appear in \cite{SS-paper}, gives some useful characterizations of $B$-valued
R-diagonal elements.

\begin{thm}[\cite{BD-paper}, Theorem 3.1]
\label{thm:R-diagonal}
Let $a\in A$.
The following are equivalent:
\begin{enumerate}[label=(\alph*)]
\item $a$ is $B$-valued R-diagonal.
\item \label{part:R-diagonal-cumulant} 
      The only non-vanishing $B$-valued $*$-cumulants of $a$
      (namely, cumulants of the pair $(a_1,a_2)=(a,a^*)$) are those that are
      alternating and of even length.
      That is, $\alpha_j=0$ unless $j$ is of the form $(1,2,\ldots,1,2)$ or
      $(2,1,\ldots,2,1)$.
\item \label{part:R-diagonal-condition}
      There is an enlargement $(A',E')$ of $(A,E)$ and a unitary $u\in A'$
      such that
      \begin{enumerate}[label=(\roman*)]
      \item $u$ commutes with $B$,
      \item $u$ is a Haar unitary,
      \item $u$ and $a$ are $*$-free with respect to $E'$, and
      \item $a$ and $ua$ have the same $B$-valued $*$-distribution.
      \end{enumerate}
\item \label{part:R-diagonal-implication}
      If $(A',E')$ is an enlargement of $(A,E)$ and $u\in A'$ is a unitary
      such that
      \begin{enumerate}[label=(\roman*)]
      \item $u$ commutes with $B$ and
      \item $u$ and $a$ are $*$-free with respect to $E'$,
      \end{enumerate}
      then $a$ and $ua$ have the same $B$-valued $*$-distribution.
\end{enumerate}
\end{thm}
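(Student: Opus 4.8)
\medskip
\noindent\textbf{Proof proposal.}
The plan is to establish the combinatorial equivalence (a)$\Leftrightarrow$(b) on its own, and then to close the loop (b)$\Rightarrow$(d)$\Rightarrow$(c)$\Rightarrow$(b) among the three free-probabilistic conditions. The engine for the latter three is a single computation: an operator-valued analogue of the Nica--Speicher formula for cumulants with products as entries, applied to express the $B$-valued $*$-cumulants of $ua$ (and of $(ua)^*=a^*u^*$) in terms of the $*$-cumulants of $a$ and of $u$. Because $u$ commutes with $B$, the interspersed elements of $B$ slide freely past the $u$-letters, so the bimodule bookkeeping stays manageable; because $\{u,u^*\}$ and $\{a,a^*\}$ are $*$-free over $B$, every mixed cumulant vanishes and only those noncrossing partitions whose blocks are monochromatic in the $u$-letters or in the $a$-letters survive.

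First I would prove (a)$\Leftrightarrow$(b) by induction on the length $n$ using the $B$-valued moment--cumulant formula. The two clauses of \myref{Definition}{defn:R-diagonal} are designed to mirror (b): the vanishing of the odd alternating moments forces the odd alternating cumulants to vanish, while the centering condition over the maximal alternating interval partition $\sigma(\varepsilon)$ is exactly a freeness-type statement asserting that no nonvanishing cumulant bridges two distinct maximal alternating blocks---and such a bridging cumulant is precisely a non-alternating one, since consecutive blocks are separated by a repeat $\varepsilon(j)=\varepsilon(j+1)$. Peeling off an interval block in the recursion defining $\hat\alpha_j(\pi)$ is what lets one match terms on the two sides.

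For (b)$\Rightarrow$(d), assume the only nonvanishing $*$-cumulants of $a$ are alternating and of even length, and let $u$ be \emph{any} unitary commuting with $B$ that is $*$-free over $B$ from $a$. Via the product formula, a generic $*$-cumulant of $ua$ becomes a sum over noncrossing partitions in which the $a$-blocks must already be alternating and even (else they vanish), while the $u$-letters occupy the gaps cut out by the $a$-structure. The key point---and the step I expect to be the main obstacle---is that the sum of the $u$-cumulants over all noncrossing partitions compatible with a fixed $a$-block pattern collapses, by the unitarity relations $uu^*=u^*u=1$ (in scalar form, $\kappa_2(u,u^*)+\kappa_1(u)\kappa_1(u^*)=E(uu^*)=1$, together with its higher analogues), to exactly the contribution one would get with $u$ absent. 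Hence $\kappa^{ua}=\kappa^a$, so $ua$ and $a$ have the same $B$-valued $*$-distribution. This telescoping is delicate precisely because $u$ is allowed to be an arbitrary unitary rather than a Haar unitary, so one cannot simply invoke the vanishing of the non-alternating $u$-cumulants; the $B$-valued setting adds the burden of tracking the $B$-coefficients through the collapse.

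Finally I would close the cycle. For (d)$\Rightarrow$(c), build an enlargement $(A',E')$ containing a Haar unitary $u$ that commutes with $B$ and is $*$-free over $B$ from $a$---for instance the amalgamated free product of $A$ with $B\mathbin{\overline{\otimes}}L^\infty(\TT)$ over $B$, taking $u=1\otimes z$---so that (i)--(iii) hold by construction and (iv) is exactly the conclusion of (d). For (c)$\Rightarrow$(b), specialize the product computation to a \emph{Haar} unitary $u$: now only the alternating $u$-cumulants are nonzero, which forces $ua$ to be R-diagonal, so its $*$-cumulants are supported on the alternating even indices; since $a$ and $ua$ share a $*$-distribution they share their $*$-cumulants, whence $a$ satisfies (b) as well. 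The existence claim used in (d)$\Rightarrow$(c) and the bijectivity of the moment--cumulant correspondence (equal $*$-distributions $\Leftrightarrow$ equal $*$-cumulants) are routine; the genuine content is the product-cumulant computation and, inside it, the unitarity-driven telescoping described above.
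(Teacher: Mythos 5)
First, a point of reference: the paper does not prove this theorem at all --- it is imported verbatim as Theorem~3.1 of \cite{BD-paper} (with parts attributed to \cite{SS-paper}), so there is no in-paper proof to compare against. Judged on its own, your proposal has the right architecture, and it matches the standard route in the literature: (a)$\Leftrightarrow$(b) by induction through the moment--cumulant recursion, (d)$\Rightarrow$(c) by an amalgamated free product adjoining a Haar unitary commuting with $B$, and (c)$\Rightarrow$(b) via the fact that $ua$ is automatically R-diagonal when $u$ is a Haar unitary commuting with $B$ and $*$-free from $a$. These pieces are genuinely routine modulo the operator-valued Krawczyk--Speicher product-cumulant formula.

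The gap is exactly where you predict it: (b)$\Rightarrow$(d). Asserting that the sum of $u$-cumulants over all noncrossing partitions compatible with a fixed alternating $a$-block pattern ``collapses by unitarity'' to the $u$-free contribution is the entire content of that implication, and for an \emph{arbitrary} unitary $u$ commuting with $B$ (where $E(u^k)$ need not vanish and non-alternating $u$-cumulants survive) this telescoping is not a formality --- you have not exhibited the cancellation beyond the length-two identity, and the $B$-valued bookkeeping does not ``slide past'' harmlessly once blocks of $u$-letters nest inside $a$-blocks. The standard way to close this hole avoids the arbitrary-unitary computation altogether: enlarge further to adjoin a Haar unitary $v$ commuting with $B$ and $*$-free from $\{u,a\}$; then $W^*(B,u,v)$ is free from $W^*(B,a)$ over $B$, so $uv$ is again a Haar unitary commuting with $B$ and free from $a$, whence $uva$ and $va$ each have the $*$-distribution of $a$ by the Haar case (which is the only product-formula computation one actually needs); finally, since $u$ is free from $va$ and $va\sim a$, the pairs $(u,va)$ and $(u,a)$ have the same joint distribution, giving $ua\sim uva\sim a$. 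I would recommend either carrying out your telescoping in full (it can be done, but it is the hardest part of \cite{BD-paper}'s argument) or substituting this reduction; as written, the proposal names the obstacle without removing it.
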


\myref{Condition}{part:R-diagonal-cumulant} of the above theorem will be
referred to as the \textit{cumulant property of R-diagonal elements},
whereas \myref{Definition}{defn:R-diagonal} will be called the
\textit{moment property of R-diagonal elements}.

In light of the cumulant property for a $B$-valued R-diagonal $a$, we can
simplify the notation for the $B$-valued $*$-cumulants of $a$.
Instead of considering the cumulants $(\alpha_j)_{j\in J}$, indexed by
$J=\bigcup_{n\in\NN}\{1,2\}^n$, we will work with the two sequences
$(\beta_k^{(1)})_k$ and
$(\beta_k^{(2)})_k$ defined by $\beta_k^{(1)} = \alpha_{(1,2,\ldots,1,2)}$
and $\beta_k^{(2)} = \alpha_{(2,1,\ldots,2,1)}$.

The cumulant property can also be used to define a special type of $B$-valued
R-diagonal element, namely, the circular elements.

\begin{defn}
A $B$-valued random variable $a\in A$ is \textit{$B$-valued circular} if it is $B$-valued R-diagonal and, in the notation introduced above for its cumulant maps $\alpha_j$
for $j\in J=\bigcup_{n\ge 1}\{1,2\}^n$,
we have
$\alpha_j=0$ whenever $j\notin\{(1,2),(2,1)\}$.
\end{defn}

\section{Classes of $B$-valued Haar unitaries}
\label{section:unitaries}

Let $B$ be a unital $*$-algebra and $(A,E)$ be a $B$-valued $*$-noncommutative
probability space.
Recall (\myref{Definition}{def:HaarU}) that a {\em Haar unitary} is a unitary element $u\in A$ such that $E(u^n)=0$ for every integer $n\ge1$.

\begin{defn}
Let $a\in A$ and consider a $*$-moment
\[
   E(a^{\varepsilon(1)}b_1a^{\varepsilon(2)}b_2\cdots a^{\varepsilon(n-1)}
      b_{n-1}a^{\varepsilon(n)}),
\]
with $n\in\NN$, $b_1,\ldots,b_{n-1}\in B$, and $\varepsilon\in\{1,*\}^n$.
We say that this $*$-moment is
\textit{unbalanced} if the number of $*$'s differ from the number
of non-$*$'s; i.e., if 
\[
   \#\{j\mid \varepsilon(j) = 1\} \ne \frac{n}{2}.
\]
An element $a\in A$ is called \textit{balanced} if all of its unbalanced
$*$-moments vanish.
An element that is not balanced is called \textit{unbalanced}.
\end{defn}

It's an easy consequence of the moment-cumulant formula and the cumulant
property of R-diagonal elements
(see \myref{Theorem}{thm:R-diagonal}\ref{part:R-diagonal-cumulant}) that every
$B$-valued R-diagonal element is balanced.
It's also clear that if a unitary is balanced, then it is a
\textit{Haar unitary};
i.e., $E(u^k)=0$ for all integers $k\ge 1$.

Recall that a unitary $u$ normalizes $B$ if $ubu^*,u^*bu\in B$ for every
$b\in B$.
It follows from the moment property of R-diagonal elements
(see \myref{Definition}{defn:R-diagonal})
that every normalizing Haar unitary is R-diagonal.
Indeed, if $u$ normalizes $B$, then for each $b\in B$ there exist $b',b''\in B$
satisfying $ub=b'u$ and $u^*b=b''u^*$.
Consequently, odd alternating moments vanish because, given any $k\ge 1$ and
$b_1,\ldots,b_{2k}\in B$, there is $b_0\in B$ such that
$ub_1u^*b_2\cdots b_{2k-2}ub_{2k-1}u^*b_{2k}u=b_0u$.
Fix $n\ge 1$, $\varepsilon\in\{1,*\}^n$, and $b_1,\ldots,b_n\in B$, and consider
the expression
\begin{equation}
\label{eqn:normalizing-r-diag}
   E\left(\prod_{V\in\sigma(\varepsilon)}\left(
      \left(\prod_{j\in V}u^{\varepsilon(j)}b_j\right)
      -
      E\left(\prod_{j\in V}u^{\varepsilon(j)}b_j\right)
      \right)
      \right).
\end{equation}
If $V_0$ is an alternating interval block in $\sigma(\varepsilon)$ of even
length, then the product $\prod_{j\in V_0}u^{\varepsilon(j)}b_j$ is in $B$,
and consequently the term
\[ \left(\prod_{j\in V_0}u^{\varepsilon(j)}b_j\right)
   -E\left(\prod_{j\in V_0}u^{\varepsilon(j)}b_j\right) \]
vanishes.
Thus, the expression~\eqref{eqn:normalizing-r-diag} vanishes whenever some block has even
length.
If there is no block of even length, then every block in $\sigma(\varepsilon)$
has odd length.
Thus
\[
   E\left(\prod_{j\in V}a^{\varepsilon(j)}b_j\right) = 0
\]
for each $V\in\sigma(\varepsilon)$, so expression~\eqref{eqn:normalizing-r-diag} becomes
\begin{align*}
   E\left(\prod_{V\in\sigma(\varepsilon)}\left(
      \left(\prod_{j\in V}u^{\varepsilon(j)}b_j\right)
      -
      E\left(\prod_{j\in V}u^{\varepsilon(j)}b_j\right)
      \right)
      \right)
   &= E(u^{\varepsilon(1)}b_1\cdots u^{\varepsilon(n)}b_n) \\
   &= b_0E(u^k)
\end{align*}
for some $b_0\in B$ and $k\in\ZZ$.
Since there are no even length blocks in the maximal alternating interval
partition, there must an unequal number of $1$'s and $*$'s in the list $\epsilon(1),\ldots,\epsilon(n)$.
Thus $k\ne0$, so~\eqref{eqn:normalizing-r-diag} vanishes by the Haar property.
This completes the proof that $u$ is R-diagonal.

The following is a summary of these basic facts for classes of unitary elements.
\begin{thm}\label{thm:unitaries}
Let $B$ be a $*$-algebra.
For a unitary $u$ in a $B$-valued $*$-non\-com\-mu\-ta\-tive probability space, consider the conditions
\begin{enumerate}
\item[(a)] $u$ is a normalizing Haar unitary;
\item[(b)] $u$ is an R-diagonal unitary;
\item[(c)] $u$ is a balanced unitary;
\item[(d)] $u$ is a Haar unitary.
\end{enumerate}
Then the implications (a)$\implies$(b)$\implies$(c)$\implies$(d) hold.
\end{thm}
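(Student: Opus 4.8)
The plan is to prove the three implications in sequence, noting that two of them have already been dispatched in the discussion preceding the statement. The implication (a)$\implies$(b) is precisely the computation carried out just above the theorem: if $u$ is a normalizing Haar unitary, then the moment property of R-diagonal elements (\myref{Definition}{defn:R-diagonal}) holds because, for the alternating products, one can slide all the $b_j$'s to one side using $ub=b'u$ and $u^*b=b''u^*$, reducing every relevant term to $b_0E(u^k)$ with $k\neq 0$, which vanishes by the Haar property. So for (a)$\implies$(b) I would simply refer back to that argument. Likewise, the implication (c)$\implies$(d) is immediate from the definitions: a balanced unitary has all unbalanced $*$-moments vanishing, and in particular $E(u^k)=E(u^kb_1\cdots)$ with $k$ copies of $u$ and none of $u^*$ is an unbalanced $*$-moment whenever $k\geq 1$, hence vanishes, which is exactly the Haar condition.

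The remaining implication, (b)$\implies$(c), is the one requiring actual work. First I would recall that every $B$-valued R-diagonal element is balanced; this is stated in the excerpt as ``an easy consequence of the moment-cumulant formula and the cumulant property of R-diagonal elements.'' Since a unitary is in particular a $B$-valued random variable, an R-diagonal unitary is therefore balanced, and this is all that (b)$\implies$(c) asserts. To make this self-contained I would unpack the cited ``easy consequence'': by the cumulant property (\myref{Theorem}{thm:R-diagonal}\ref{part:R-diagonal-cumulant}), the only nonvanishing $*$-cumulants $\alpha_j$ of $a$ are those with $j$ alternating of even length, i.e.\ $j=(1,2,\ldots,1,2)$ or $(2,1,\ldots,2,1)$. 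I would then expand an arbitrary $*$-moment via the moment-cumulant formula as a sum over $\pi\in\NC(n)$ of products of the maps $\hat\alpha_j(\pi)$, and observe that each surviving block contributes a cumulant indexed by an alternating tuple of even length, hence contributes exactly as many $1$'s as $2$'s (equivalently, as many non-$*$'s as $*$'s) to that block.

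The heart of the argument is thus a counting observation: since the blocks of any $\pi\in\NC(n)$ partition the index set $\{1,\ldots,n\}$, and since every block that contributes a nonzero cumulant must contain equally many positions labelled $1$ as positions labelled $*$, summing over the blocks shows that the \emph{total} number of $1$'s equals the total number of $*$'s in $\varepsilon$. Consequently any $*$-moment with $\#\{j:\varepsilon(j)=1\}\neq n/2$ has every term in its moment-cumulant expansion equal to zero, so the moment itself vanishes. That is exactly the statement that $a$ is balanced. The main obstacle I anticipate is purely bookkeeping: I must verify from the recursive definition of $\hat\alpha_j(\pi)$ that restricting $j$ to a block preserves the alternating-even-length structure needed for the count, so that a nonzero $\hat\alpha_j(\pi)$ forces each block to be balanced and hence the parity count goes through; once that is checked, the implication follows immediately.
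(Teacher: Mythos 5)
Your proposal is correct and follows essentially the same route as the paper: the theorem is explicitly presented there as a summary of the preceding discussion, which contains the normalizing-implies-R-diagonal computation for (a)$\implies$(b), asserts (b)$\implies$(c) as an easy consequence of the moment-cumulant formula and the cumulant property, and notes (c)$\implies$(d) as clear from the definitions. Your unpacking of the (b)$\implies$(c) counting argument --- that any noncrossing partition contributing a nonzero term must have every block indexed by an alternating even-length tuple, forcing equal numbers of $1$'s and $*$'s overall --- is exactly the intended argument and is sound.
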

In the case $B=\CC$, the notion of normalizing is redundant, because every unitary normalizes the
scalars; so the four conditions are equivalent in this case.
The next three examples will show that none of the reverse implications hold in \myref{Theorem}{thm:unitaries}, when considering general $B$.

\begin{ex}[Unbalanced Haar unitary]
\label{ex:unbalanced haar}
Let $A=M_2(C(\TT))$ be the algebra of $2\times 2$ matrices
with entries from the continuous functions on the circle.
We identify $B=\CC^2$ with the subalgebra of $A$ consisting of diagonal matrices
whose entries are constant functions.
Let $E:A\to B$ be the conditional expectation given by
\[
   E\left( \begin{pmatrix}f_{11} & f_{12}\\ f_{21} & f_{22}\end{pmatrix} \right)
   = \begin{pmatrix} \tau(f_{11}) & 0 \\ 0 & \tau(f_{22}) \end{pmatrix},
\]
where $\tau$ is the trace on $C(\TT)$ given by integration with respect to the
Haar measure on $\TT$.
This gives the $B$-valued C$^*$-noncommutative probability space $(A,E)$.

Let $v\in C(\TT)$ be the identity function; namely, $z\mapsto z$.
Then $v$ is a Haar unitary in $C(\TT)$ with respect to $\tau$.
By identifying $A$ with $M_2(\CC)\otimes C(\TT)$ in the usual way, we define
the unitary
\[ u=p\otimes v + (1-p)\otimes v^* \in A, \]
where the projection $p\in M_2(\CC)$ is given by
\begin{equation}
\label{eq:p2x2}
   p = \frac{1}{2}\begin{pmatrix} 1 & 1 \\ 1 & 1 \end{pmatrix}.
\end{equation}
Since $u^k = p\otimes v^k + (1-p) \otimes (v^*)^k$, we have
\[
   E(u^k) = \frac{1}{2}\begin{pmatrix}\tau(v^k)+\tau((v^*)^k) & 0
                           \\ 0 & \tau(v^k)+\tau((v^*)^k) \end{pmatrix}.
\]
Hence, $u$ is a Haar unitary.

To see that $u$ is unbalanced, we consider the matrix unit $e_{11}\in M_2(\CC)$ as an element of
$B$ and compute
\begin{align*}
   E(ue_{11}u)
   &= E\left(
      \frac{1}{4}\begin{pmatrix} v+v^* & 0 \\ v-v^* & 0 \end{pmatrix}
      \begin{pmatrix} v+v^* & v-v^* \\ 0 & 0 \end{pmatrix}
      \right) \\
   &= E\left(
      \frac{1}{4}
      \begin{pmatrix} v^2+(v^*)^2+2 & v^2-(v^*)^2
            \\ v^2-(v^*)^2 & v^2+(v^*)^2-2 \end{pmatrix}
      \right) \\
   &= \frac{1}{2} \begin{pmatrix} 1 & 0 \\ 0 & -1 \end{pmatrix}.
\end{align*}
We conclude that $u$ is an unbalanced Haar unitary, as required.
\end{ex}

The following example follows a similar pattern.

\begin{ex}[Balanced unitary that is not R-diagonal]
Let $v=1\cdot(1,0)$ and $w=1\cdot(0,1)$ be two commuting Haar unitaries in the
group C$^*$-algebra C$^*(\ZZ \times \ZZ)$ with respect to the canonical trace
$\tau$, which is defined by
\[ \tau\left(\sum_{(n,m)\in\ZZ\times\ZZ}c_{(n,m)}\cdot (n,m)\right)
   = c_{(0,0)}. \]
Note that $v^nw^m = 1\cdot (n,m)$, so
\begin{equation}
\label{eqn:joint haar}
   \tau(v^nw^m) = 0\, \text{for all}\, (n,m)\in\ZZ\times\ZZ\setminus\{(0,0)\}.
\end{equation}
Let $A = M_2(C^*(\ZZ \times \ZZ))$, and identify $B=\CC^2$ with the set of
diagonal matrices whose entries are only supported on the identity element
$(0,0)$ of $\ZZ\times\ZZ$.
By defining the conditional expectation $E:A\to B$ by
\[
   E\left(\begin{pmatrix}
         g_{11} & g_{12} \\ g_{21} & g_{22}
         \end{pmatrix}\right)
      = \begin{pmatrix}
         \tau(g_{11}) & 0 \\ 0 & \tau(g_{22})
        \end{pmatrix},
\]
we have the $B$-valued C$^*$-noncommutative probability space $(A,E)$.

Taking $p$ as in~\eqref{eq:p2x2}, we define the unitary
\[
   u = p \otimes v + (1-p) \otimes w\in A.
\]
To show this is a balanced Haar unitary, we consider an unbalanced $B$-valued
$*$-moment
\[
   x=u^{\varepsilon(1)}b_1\cdots u^{\varepsilon(n-1)}b_{n-1}
      u^{\varepsilon(n)},
\]
where $n\in\NN$, $b_1,\ldots,b_{n-1}\in B$, $\varepsilon\in\{1,*\}^n$, and
$m\coloneqq\#\{j \mid \varepsilon(j)=1\}$ is not equal to $n/2$.
Observe that $x$ is a $2\times 2$ matrix $(a_{ij})$,
where each entry is of the form
\[
   a_{ij} = \sum_{k=0}^m\sum_{\ell=0}^{n-m}
            \lambda_{k,\ell}^{(i,j)}v^kw^{m-k}(v^*)^{\ell}(w^*)^{n-m-\ell}
\]
for some $\lambda_{k,\ell}^{(i,j)}\in\CC$.
Since $m\ne n/2$, we cannot have $k=\ell$ and $m-k = n-m-\ell$ occuring
simultaneously.
We deduce from property~\eqref{eqn:joint haar} that each term
$v^kw^{m-k}(v^*)^{\ell}(w^*)^{n-m-\ell}$ inside the sum has trace zero.
Consequently $\tau(a_{ii})=0$ for each $i$ so that $E(x)=0$.
Therefore $u$ is a balanced unitary.

We show $u$ is not $B$-valued R-diagonal by showing that it violates the moment
condition of \myref{Definition}{defn:R-diagonal}.
More precisely, we aim to prove
\[
  E([u^*b_1u - E(u^*b_1u)]b_2 [ub_3u^* - E(ub_3u^*)]) \ne 0
\]
when $b_1=b_2=b_3=1\oplus0\in B$.
Note that $1\oplus0\in B$ is identified with the matrix unit $e_{11}\in\ M_2(\CC)$.
It is straightforward to compute
\begin{gather*}
u^*b_1u-E(u^*b_1u)=\frac14\begin{pmatrix}vw^*+v^*w & vw^*-v^*w \\ -vw^*+v^*w & -vw^*-v^*w\end{pmatrix}, \\
ub_3u^*-E(ub_3u^*)=\frac14\begin{pmatrix}vw^*+v^*w & -vw^*+v^*w \\ vw^*-v^*w & -vw^*-v^*w\end{pmatrix}, \\
E(\big(u^*b_1u - E(u^*b_1u)\big)b_2\big(ub_3u^* - E(ub_3u^*)\big))=\frac18\begin{pmatrix}1&0\\0&-1\end{pmatrix}.
\end{gather*}

Therefore the balanced unitary $u$ is not $B$-valued $R$-diagonal, as required.
\end{ex}

Before presenting the third example, we show that $R$-diagonal unitaries are precisely those
unitaries that appear as polar parts of $R$-diagonal random variables.
Note that every R-diagonal unitary $u$ trivially appears as the polar part of the
polar decomposition $u1_A$ of iself.
The following proposition, which is a generalization of Proposition 5.2 from
\cite{BD-paper}, implies that every unitary appearing in the polar decomposition
of a $B$-valued R-diagonal must be R-diagonal.

\begin{prop}
\label{prop:polar are r-diagonal}
Let $B$ be a von Neumann algebra and $(A,E)$ be a $B$-valued
W$^*$-noncommuta\-tive probability space.
Suppose $a\in A$ is $B$-valued R-diagonal and $a=v|a|$ is its polar decomposition.
Then the partial isometry $v$ is $B$-valued R-diagonal.
\end{prop}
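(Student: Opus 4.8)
The plan is to leverage the absorption characterization of R-diagonality, \myref{Theorem}{thm:R-diagonal}\ref{part:R-diagonal-condition}: to prove that $v$ is R-diagonal it suffices to exhibit, in some enlargement, a single Haar unitary $u$ that commutes with $B$, is $*$-free from $v$ over $B$, and satisfies that $v$ and $uv$ have the same $B$-valued $*$-distribution. Since $a$ is R-diagonal, the very same characterization provides an enlargement $(A',E')$ together with such a $u$ for $a$: a Haar unitary commuting with $B$, $*$-free from $a$ over $B$, with $a$ and $ua$ sharing the same $B$-valued $*$-distribution. I would reuse exactly this $u$.

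First I would locate $v$ inside $W^*(B\cup\{a\})$. The partial isometry in the polar decomposition of $a$ lies in the von Neumann algebra generated by $a$, so $v\in W^*(B\cup\{a\})$. Because $*$-freeness of $u$ and $a$ over $B$ passes to the generated von Neumann algebras (freeness being preserved under $\sigma$-weak closure), $u$ is $*$-free over $B$ from all of $W^*(B\cup\{a\})$, and in particular from $v$. This secures conditions (i)--(iii) of \myref{Theorem}{thm:R-diagonal}\ref{part:R-diagonal-condition} for the pair $(u,v)$; the remaining work is condition (iv).

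The crux is to transfer ``$a$ and $ua$ have the same $*$-distribution'' into ``$v$ and $uv$ have the same $*$-distribution.'' The key observation is that $u$ is unitary, so $(ua)^*(ua)=a^*a$ and hence $|ua|=|a|$; consequently $ua=(uv)|a|$ is the polar decomposition of $ua$, with polar part $uv$. I would approximate the partial isometry by $v_\varepsilon=a(a^*a+\varepsilon)^{-1/2}$, which is uniformly bounded by $1$ and converges $\sigma$-strongly to $v$. For each fixed $\varepsilon>0$ the element $(a^*a+\varepsilon)^{-1/2}$ is a norm-limit of polynomials in $a^*a$, so $v_\varepsilon$ is a fixed $*$-expression in $a$ and $a^*$; applying the identical expression to $ua$ yields $ua(a^*a+\varepsilon)^{-1/2}=u\,v_\varepsilon$. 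Since $a$ and $ua$ have the same $B$-valued $*$-distribution, $v_\varepsilon$ and $u\,v_\varepsilon$ therefore have the same $B$-valued $*$-distribution for every $\varepsilon>0$.

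Finally I would pass to the limit $\varepsilon\to 0$. As $v_\varepsilon\to v$ and $u\,v_\varepsilon\to uv$ $\sigma$-strongly on the unit ball, and $E'$ is normal, every $B$-valued $*$-moment of $v_\varepsilon$ (respectively $u\,v_\varepsilon$) converges to the corresponding moment of $v$ (respectively $uv$); hence $v$ and $uv$ share the same $B$-valued $*$-distribution. This establishes condition (iv), and \myref{Theorem}{thm:R-diagonal} then yields that $v$ is $B$-valued R-diagonal. I expect the main obstacle to be precisely this limiting argument: one must check that the functional-calculus approximants of the polar part are genuine norm-limits of fixed $*$-expressions in $a,a^*$, so that equality of $*$-distributions holds for each $\varepsilon$, and that normality of $E'$ legitimately carries these equalities through the $\sigma$-strong limit.
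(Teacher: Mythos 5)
Your proposal is correct and follows essentially the same route as the paper: introduce a Haar unitary $u$ commuting with $B$ and $*$-free from $a$ via the amalgamated free product, observe that $(uv)|a|$ is the polar decomposition of $ua$, deduce that $v$ and $uv$ have the same $B$-valued $*$-distribution, and invoke \myref{Theorem}{thm:R-diagonal}. The only difference is that you spell out, via the approximants $a(a^*a+\varepsilon)^{-1/2}$ and normality of $E'$, the step the paper leaves implicit (that equality of the $*$-distributions of $a$ and $ua$ forces equality of the $*$-distributions of their polar parts), and that justification is sound.
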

\begin{proof}
Using the amalgamated free product construction for von Neumann algebras, we
may assume without loss of generality that there is a Haar unitary $u\in A$ that
commutes with $B$ and is $*$-free from $a$ over $B$.
Thus $u$ and $v$ are $*$-free over $B$.
Using \myref{Theorem}{thm:R-diagonal}\ref{part:R-diagonal-condition},
we realize that $a$ and $ua=uv|a|$ have the same $B$-valued $*$-distribution.
Since $(uv)|a|$ is the polar decomposition for $ua$.
it follows that $v$ and $uv$ must have the
same $B$-valued $*$-distribution.
Now we may employ
\myref{Theorem}{thm:R-diagonal}\ref{part:R-diagonal-implication}
to get that $v$ is $B$-valued R-diagonal.
\end{proof}

We'll also need the following lemma for our third example.
\begin{lem}
\label{lem:normalizing}
Let $B$ be a unital C$^*$-algebra with a faithful tracial state $\tau$ and
let $(A,E)$ be a
C$^*$-noncommutative probability space such that $\tau\circ E$ is a trace.
Suppose a unitary $u\in A$ normalizes $B$ with the automorphism $\theta$ on $B$ defined by
$\theta(b)=u^*bu$.
Then
\begin{enumerate}[label=(\alph*)]
\item $x\in\ker E$ implies $u^*xu,\,uxu^*\in\ker E$,
\item $E(u^*xu) = \theta(E(x))$ for all $x\in A$,
\item $E(uxu^*) = \theta^{-1}(E(x))$ for all $x\in A$.
\end{enumerate}
\end{lem}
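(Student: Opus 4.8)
The plan is to prove part (b) first, since parts (a) and (c) then follow quickly. The key preliminary observation is that $\theta^{-1}(b)=ubu^*$ (because $\theta(b)=u^*bu$ and $u$ is unitary) and that both $\theta$ and $\theta^{-1}$ preserve $\tau$ on $B$. To see the latter, write $\tilde\tau=\tau\circ E$, which is a trace on $A$ by hypothesis and agrees with $\tau$ on $B$. For $b\in B$ we have $u^*bu=\theta(b)\in B$, so
\[
   \tau(\theta(b)) = \tilde\tau(u^*bu) = \tilde\tau(buu^*) = \tilde\tau(b) = \tau(b),
\]
using traciality of $\tilde\tau$ and $uu^*=1$. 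Hence $\tau\circ\theta=\tau$, and likewise $\tau\circ\theta^{-1}=\tau$.

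For part (b), I would establish $E(u^*xu)=\theta(E(x))$ by testing both sides against $\tau(b\,\cdot\,)$ for arbitrary $b\in B$ and invoking faithfulness of $\tau$ on $B$. On one hand, using the bimodule property of $E$ and traciality of $\tilde\tau$,
\[
   \tau\bigl(b\,E(u^*xu)\bigr) = \tilde\tau(bu^*xu) = \tilde\tau\bigl(ubu^*\,x\bigr)
      = \tilde\tau\bigl(\theta^{-1}(b)\,x\bigr) = \tau\bigl(\theta^{-1}(b)\,E(x)\bigr).
\]
On the other hand, since $\theta$ is a trace-preserving automorphism of $B$,
\[
   \tau\bigl(b\,\theta(E(x))\bigr) = \tau\Bigl(\theta^{-1}\bigl(b\,\theta(E(x))\bigr)\Bigr)
      = \tau\bigl(\theta^{-1}(b)\,E(x)\bigr).
\]
The two right-hand sides coincide, so $\tau\bigl(b\,(E(u^*xu)-\theta(E(x)))\bigr)=0$ for every $b\in B$; taking $b=(E(u^*xu)-\theta(E(x)))^*$ and using faithfulness of $\tau$ forces $E(u^*xu)=\theta(E(x))$, which is (b). Equivalently, one can package this as the statement that $x\mapsto\theta^{-1}(E(u^*xu))$ is a $\tau$-preserving conditional expectation onto $B$ and appeal to uniqueness of such expectations, which rests on the same faithfulness argument.

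For part (c), I would simply apply (b) to the unitary $u^*$ in place of $u$: it also normalizes $B$, and its associated automorphism is $b\mapsto (u^*)^*bu^* = ubu^* = \theta^{-1}(b)$. Part (b) applied to $u^*$ then reads $E(uxu^*)=\theta^{-1}(E(x))$, which is (c). Finally, part (a) is immediate from (b) and (c): if $x\in\ker E$, then $E(u^*xu)=\theta(0)=0$ and $E(uxu^*)=\theta^{-1}(0)=0$.

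The only genuine content is the passage from ``the two sides agree after pairing with $\tau(b\,\cdot\,)$'' to actual equality in $B$, which is precisely where faithfulness of $\tau$ enters; everything else is bookkeeping with the bimodule property of $E$, traciality of $\tilde\tau$, and the fact that $\theta$ respects $\tau$. The one place to be careful is keeping the direction of $\theta$ versus $\theta^{-1}$ straight throughout, especially in the symmetric reduction of (c) to (b).
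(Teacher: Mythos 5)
Your proof is correct and rests on the same two ingredients as the paper's: traciality of $\tau\circ E$ to cycle $u$ and $u^*$ inside the trace, and faithfulness of $\tau$ to upgrade a vanishing trace pairing to an equality in $B$. The only difference is organizational — the paper proves (a) first by computing $\tau\bigl(E(u^*xu)^*E(u^*xu)\bigr)=0$ for $x\in\ker E$ and then deduces (b) and (c) via the decomposition $x=(x-E(x))+E(x)$, whereas you prove (b) directly by pairing against arbitrary $b\in B$ and then obtain (c) by swapping $u$ for $u^*$ and (a) as an immediate consequence — so this is essentially the same argument.
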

\begin{proof}
Assuming (a) holds and writing $x\in A$ as $x=\bub{x}+E(x)$, we have
$u^*\bub{x}u,u\bub{x}u^*\in\ker(E)$ so that
\[
   E(u^*xu)=E(u^*\bub{x}u)+E(u^*E(x)u)=\theta(E(x))
\]
and
\[
   E(uxu^*)=E(u\bub{x}u^*)+E(uE(x)u^*)=\theta^{-1}(E(x)).
\]
Hence it suffices to prove (a).

Let $x\in\ker(E)$.
By the traciality of $\tau\circ E$, we have
\begin{multline*}
   \tau(E(u^*xu)^*E(u^*xu))
   = (\tau\circ E)(u^*x^*uE(u^*xu))
   = (\tau\circ E)(x^*uE(u^*xu)u^*) \\
   = (\tau\circ E)(x^*\theta^{-1}(E(u^*xu)))
   = \tau(E(x^*)\theta^{-1}(E(u^*xu)))
   = 0.
\end{multline*}
The faithfulness of $\tau$ gives $E(u^*xu)=0$.
Similarly, we get $E(uxu^*)=0$, which completes the proof.
\end{proof}

Now we can go forward with our final example of the section.
\begin{ex}[R-diagonal unitary that is not normalizing]
\label{ex:RdiagNotNormalizing}
Let $B=\CC^2$.
Consider
the $B$-valued circular element $a$ that does not have a free polar decomposition
in Example 6.8 of~\cite{BD-paper}.
This was realized in a $B$-valued W$^*$-noncommutative probability space, which we will here denote $(A,E)$.
It was shown that the trace $\tau$ on $B$ given by $\tau(\lambda_1,\lambda_2)=\frac{\lambda_1+\lambda_2}2$ satisfies that $\tau\circ E$ is a normal,
faithful, tracial state on $A$.
Taking the polar decomposition $u|a|$ of $a$,
Proposition A.1 of \cite{BD-paper} shows that $u$ is a unitary, and so by
\myref{Proposition}{prop:polar are r-diagonal}, $u$ is $B$-valued R-diagonal unitary.
The cumulants of $a$ are given by
\begin{align*}
   \alpha_{(1,2)}(\lambda_1,\lambda_2)
      &= \left(\frac{\lambda_1}{2}, \frac{\lambda_1}{2} + \lambda_2\right) \\
   \alpha_{(2,1)}(\lambda_1,\lambda_2)
      &= \left(\frac{\lambda_1+\lambda_2}{2}, \lambda_2\right).
\end{align*}

We will show that $u$ fails to normalize $B$.
Suppose, to obtain a contradiction, that $u$ normalizes $B$ with the automorphism $\theta$ on $B$ defined by
$\theta(b)=u^*bu$.
By part~(c) of
\myref{Lemma}{lem:normalizing}, we have
\begin{equation}
\begin{aligned}
\label{eqn:normalizing}
   E(aa^*baa^*)
   &= E(u|a|^2u^*bu|a|^2u^*)
   = \theta^{-1}(E(|a|^2\theta(b)|a|^2)) \\
   &= \theta^{-1}(E(a^*a\theta(b)a^*a))
\end{aligned}
\end{equation}
for all $b\in B$.

With the use of the moment cumulant formula, we verify that the leftmost
expression in~\eqref{eqn:normalizing} is equal to
\begin{equation}
\label{eq:LHSnormalizing}
   E(aa^*baa^*)
      = \alpha_{(1,2)}(1_B)b\alpha_{(1,2)}(1_B) 
         + \alpha_{(1,2)}(\alpha_{(2,1)}(b))
\end{equation}
while the rightmost expression in~\eqref{eqn:normalizing} is
\begin{equation}
\label{eq:RHSnormalizing}
\theta^{-1}(E(a^*a\theta(b)a^*a))=\theta^{-1}\bigg(\alpha_{(2,1)}(1_B)\theta(b)\alpha_{(2,1)}(1_B)+\alpha_{(2,1)}\big(\alpha_{(1,2)}(\theta(b))\big)\bigg).
\end{equation}
Set $b=(1,0)$.
Then we get that the expression~\eqref{eq:LHSnormalizing} equals$(\frac12,\frac14)$.
There are only two automorphisms on $B=\CC^2$ and we can easily compute the value of the expression~\eqref{eq:RHSnormalizing} in both cases.
If $\theta$ is the identity on $B$, then~\eqref{eq:RHSnormalizing} equals $(\frac54,\frac14)$, while
if $\theta$ is the flip on $B$ (i.e.,
$\theta((\lambda_1,\lambda_2))=(\lambda_2,\lambda_1)$), then~\eqref{eq:RHSnormalizing} equals $(\frac94,\frac14)$.
In both cases we have verified that equation~\eqref{eqn:normalizing} fails to hold.
Therefore the R-diagonal unitary $u$ does not normalize $B$, as required.
\end{ex}

\section{Bipolar decompositions}
\label{sec:bipolar}

In this section, we assume that $B$ is a W$^*$-algebra
and that $(A,E)$ is a $B$-valued W$^*$-noncommutative probability space with $E$ faithful.
Since we will be concerned with at most countably many different elements of $A$ at a time, we may without loss of generality assume that $A$ and $B$ have weakly dense
countable subsets, and, thus, that $B$ has a normal, faithful state $\phi_B$.
We may assume that $A$ acts on a Hilbert space via GNS represenation associated to the state $\phi_B\circ E$.

Since $A$ is a W$^*$-algebra, every element $a\in A$ has a unique polar decomposition $a=v|a|$, where $|a|=(a^*a)^{1/2}$ and $v$ is a partial isometry
such that $v^*v$ equals the support projection $1_{(0,\infty)}(a^*a)$ of $a$.

\begin{defn}
A \textit{bipolar decomposition} of a $B$-valued random variable $a\in A$
is a pair $(u,x)$ in some
$B$-valued W$^*$-noncommutative probability space $(A',E')$ with $E'$ faithful,
such that $u$ is a partial
isometry, $x$ is self-adjoint, $u^*ux=x$ and the $B$-valued $*$-distributions of $a$ and $ux$ coincide.
We say the decomposition $(u,x)$ is {\em minimal} if $u^*u$ equals the support projection of $x$, namely, $1_{(0,\infty)}(x^2)$.
We say that two bipolar decompositions $(u,x)$ and $(\ut,\xt)$ are {\em equivalent} if, as pairs of $B$-valued random variables, they have the same
joint $*$-distributions.
\end{defn}

We use the name ``bipolar'' because of the positive and negative directions (poles) on the real axis.
If $(u,x)$ as above is a bipolar decompostion of $a$, then we may write $x=s|x|$ for a symmetry (namely, a self-adjoint unitary) $s\in A'$ that commutes with $x$.
Then, of course, $(us,|x|)$ is also a bipolar decomposition of $a$.

\begin{defn}
We say that a bipolar decomposition $(u,x)$ of $a$ (as above) is
\begin{itemize}
\item[$\bullet$] {\em unitary} if $u$ is a unitary element; 
\item[$\bullet$] {\em tracial} if there is a normal tracial state $\tau_B$ on $B$ so that $\tau_B\circ E'$ is a trace on the $*$-algebra generated by $u$ and $x$;
(note that this implies that the element $a$ is tracial, too);
\item[$\bullet$] {\em standard} if there is a symmetry $s\in A'$ such that $x=s|x|$ and such that $s$ commutes with $x$, with $u$
and with every $b\in B$ (for the copy of $B$ embedded in $A'$), or if $(u,x)$ is equivalent to a bipolar decomposition for which such a symmetry $s$ exists;
\item[$\bullet$] {\em even} if all odd moments of $x$ vanish, namely, if
\[
E'(xb_1xb_2\cdots xb_{2n}x)=0
\]
for every $n\ge1$ and all $b_1,\ldots,b_{2n}\in B$.
\item[$\bullet$] {\em positive} if $x\ge0$; 
\item[$\bullet$] {\em free} if $u$ and $x$ are $*$-free over $B$ (with respect to the conditional expectation $E'$);
\item[$\bullet$] {\em normalizing} if it is unitary and $u$ normalizes the algebra $B$,
namely, if $u^*bu=\theta(b)$ for every $b\in B$, for some automorphism $\theta$ of $B$.
\end{itemize}
\end{defn}

Of course, the polar decomposition $a=v|a|$ of $a\in A$ gives rise to a bipolar decomposition $(v,|a|)$ of $a$ that is minimal and positive.

\begin{lem}
Every $B$-valued random variable $a\in A$ has a bipolar decomposition that is standard, even and minimal.
If $a$ is tracial, then this bipolar decomposition can also be taken to be tracial.
\end{lem}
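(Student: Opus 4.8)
The plan is to begin from the polar decomposition $a=v|a|$, which already provides the minimal, positive bipolar decomposition $(v,|a|)$, and then to symmetrize the positive part by tensoring in a central sign. Concretely, I would pass to the enlargement $(A',E')=(A\otimes\CC^2,\,E\otimes\mu)$, where $\mu(\lambda_1,\lambda_2)=\tfrac12(\lambda_1+\lambda_2)$; identifying $A\otimes\CC^2$ with $A\oplus A$, this is the map $E'(a_+,a_-)=\tfrac12\bigl(E(a_+)+E(a_-)\bigr)$, and $B$ is embedded as $B\otimes 1$. One checks at once that $E'$ is a normal conditional expectation onto $B$ that restricts to $E$ on $A\otimes 1$, and that it is faithful: if $E'\bigl((w\otimes 1)^*(w\otimes 1)\bigr)=0$ then both summands $E(w_\pm^*w_\pm)\ge 0$ vanish, whence $w=0$ by faithfulness of $E$. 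Thus $(A',E')$ is a legitimate $B$-valued W$^*$-noncommutative probability space. I then introduce the central symmetry $s=1\otimes(1,-1)\in A'$; it satisfies $s^2=1$, commutes with $A\otimes 1$ (hence with $v$, with $|a|$, and with $B$), and enjoys the crucial ``centering'' property $E'(sw)=0$ for every $w\in A\otimes 1$, since $\mu(1,-1)=0$.

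Next I would set $x=s|a|$ and $u=vs$, equivalently $x=|a|\otimes(1,-1)$ and $u=v\otimes(1,-1)$. Because $s^2=1$ and $s$ commutes with $v$ and $|a|$, we get $ux=v|a|=a$ under the embedding $A\hookrightarrow A\otimes 1$, and since $E'$ restricts to $E$ there, $(u,x)$ has the same $B$-valued $*$-distribution as $a$; as $u^*ux=x$ holds (using that $v^*v$ is the support projection of $|a|$), $(u,x)$ is a bipolar decomposition. The three required properties are then short checks. For \emph{standardness}, the symmetry $s$ works: $x^2=|a|^2$ gives $|x|=|a|$, so $x=s|x|$, and $s$ commutes with $x$, with $u$, and with $B$. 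For \emph{minimality}, $u^*u=v^*v\otimes 1$ is exactly the support projection of $x=|a|\otimes(1,-1)$. For \emph{evenness}, any odd moment $E'(xb_1x\cdots b_{2n}x)$ collapses, using that $s$ is central with $s^{2n+1}=s$, to $E'\bigl(s\,(|a|b_1|a|\cdots|a|)\bigr)$, which vanishes by the centering property $E'(sw)=0$.

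For the tracial refinement, suppose $\tau_B$ is a normal, faithful tracial state on $B$ with $\tau_B\circ E$ a trace on the $*$-algebra generated by $a$ and $B$. Since $\tau_B\circ E$ is normal, I would first extend its trace property from that weakly dense $*$-algebra to the von Neumann algebra $W^*(a,B)$, which contains the polar parts $v$ and $|a|$; this is the one point that genuinely needs care, and it is handled by the standard argument that, for fixed $y$ in the dense $*$-algebra, the normal functionals $z\mapsto(\tau_B\circ E)(zy)$ and $z\mapsto(\tau_B\circ E)(yz)$ agree on a weakly dense set and hence everywhere, and then one lets $y$ vary. Every element of the $*$-algebra generated by $u$ and $x$ is a linear combination of elements $w\otimes 1$ and $w\otimes(1,-1)$ with $w\in W^*(a,B)$, and for $\alpha=w_1\otimes c_1$, $\beta=w_2\otimes c_2$ (with $c_i\in\{(1,1),(1,-1)\}$) one computes $(\tau_B\circ E')(\alpha\beta)=\tau_B\bigl(E(w_1w_2)\bigr)\mu(c_1c_2)$ and $(\tau_B\circ E')(\beta\alpha)=\tau_B\bigl(E(w_2w_1)\bigr)\mu(c_1c_2)$, which coincide because $\tau_B\circ E$ is a trace on $W^*(a,B)$. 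By bilinearity, $\tau_B\circ E'$ is a (normal) trace on the $*$-algebra generated by $u$ and $x$, so the decomposition is tracial. The only nonroutine step in the whole argument is this normality extension of the trace property so as to cover the polar parts $v$ and $|a|$; the remaining verifications are direct computations in $A\otimes\CC^2$.
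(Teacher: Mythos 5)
Your proposal is correct and follows essentially the same route as the paper: the paper also doubles the space (writing $A'=A\oplus A$ rather than $A\otimes\CC^2$, with $E'(a_1\oplus a_2)=\tfrac12(E(a_1)+E(a_2))$), takes $u=w\oplus(-w)$, $x=|a|\oplus(-|a|)$, and uses the symmetry $s=1\oplus(-1)$, with evenness coming from the same $\tfrac12(1+(-1)^n)$ cancellation. The only difference is that you spell out the tracial refinement (including the extension of the trace property to $W^*(a,B)$ by normality), which the paper leaves to the reader.
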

\begin{proof}
Consider the polar decomposition $a=w|a|$ of $a$.
Let $A'=A\oplus A$ and consider the embedding $i:B\to A'$ given by $i(b)=b\oplus b$.
Using $i$ to identifying $B$ with its image in $A'$, we have the conditional expectation $E':A'\to B$
given by
$E'(a_1\oplus a_2)=\frac12(E(a_1)+E(a_2))$.
In $A'$, consider $u=w\oplus(-w)$ and $x=|a|+(-|a|)$.
Then $(u,x)$ is a bipolar decomposition that is minimal.
We see that it is standard, using the symmetry $s=1\oplus(-1)$.
It is even because for every integer $n\ge1$ and every $b_1,\ldots,b_{n-1}\in B$, we have
\[
E'(x\,i(b_1)\,x\,i(b_2)\cdots x\,i(b_{n-1})\,x)=\frac12(1+(-1)^n)E(|a|\,b_1\,|a|\,b_2\cdots |a|\,b_{n-1}\,|a|)
\]
and this vanishes whenever $n$ is odd.
\end{proof}

We don't quite have the analogue of \myref{Proposition}{prop:polar are r-diagonal} for bipolar decompositions, but the next lemma is used in a similar way.
\begin{lem}\label{lem:takeR-diag}
Suppose a $B$-valued R-diagonal element $a$ has a bipolar decomposition $(v,x)$.
Then $a$ also has a bipolar decomposition $(v',x')$, where $x'$ has the same distribution as $x$ and where $v'$ is $B$-valued R-diagonal.
Furthermore, if $(v,x)$ is tracial, unitary, minimal, standard, free or normalizing, then also
$(v',x')$ can be taken to be tracial, unitary, minimal, standard, free, or normalizing, respectively.
\end{lem}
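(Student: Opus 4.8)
The plan is to mimic the proof of \myref{Proposition}{prop:polar are r-diagonal}: leave $x$ essentially unchanged and multiply the partial isometry on the left by a freely chosen Haar unitary that commutes with $B$. Using the amalgamated free product construction, enlarge the ambient space if necessary so that it contains a Haar unitary $u$ that commutes with $B$ and is $*$-free over $B$ from $\{v,x\}$, and set $v'=uv$ and $x'=x$. Then $v'$ is a partial isometry with $(v')^*v'=v^*u^*uv=v^*v$, so $(v')^*v'x'=v^*vx=x=x'$, and $(v',x')$ is a candidate bipolar decomposition. To see that it decomposes $a$, note first that $vx$ has the same $B$-valued $*$-distribution as $a$ and hence is itself R-diagonal, since R-diagonality is a property of the $*$-distribution (\myref{Definition}{defn:R-diagonal}). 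Because $u$ commutes with $B$ and is $*$-free over $B$ from $vx$, \myref{Theorem}{thm:R-diagonal}\ref{part:R-diagonal-implication} shows that $vx$ and $u(vx)=v'x'$ have the same $B$-valued $*$-distribution; thus $v'x'$ and $a$ agree in $*$-distribution.

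Next I would show that $v'=uv$ is R-diagonal, using the equivalence of R-diagonality with condition \ref{part:R-diagonal-condition} of \myref{Theorem}{thm:R-diagonal}, i.e.\ by exhibiting a single witnessing unitary. Enlarge once more to obtain a Haar unitary $w$ that commutes with $B$ and is $*$-free over $B$ from $\{u,v,x\}$. By associativity of freeness, $W^*(w,B)$, $W^*(u,B)$ and $W^*(v,B)$ are free over $B$, so $wu$ is $*$-free over $B$ from $v$; moreover $wu$ commutes with $B$ and, being an alternating product of the centered Haar unitaries $w$ and $u$, satisfies $E'((wu)^k)=0$ for $k\ne0$, so $wu$ is again a Haar unitary. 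Now a Haar unitary that commutes with $B$ has its $B$-valued $*$-distribution completely determined by the condition $E'((\,\cdot\,)^k)=\delta_{k,0}1_B$ (because every $*$-moment collapses, via commutation with $B$, to a $B$-multiple of $E'$ of a single power), and freeness over $B$ then determines the joint $*$-distribution with $v$ from the two marginals. Since $v$ is literally the same element in both pairs, $(u,v)$ and $(wu,v)$ have the same joint $B$-valued $*$-distribution, and applying the product map gives that $uv$ and $w(uv)$ have the same $*$-distribution. As $w$ is a Haar unitary commuting with $B$ and $*$-free over $B$ from $uv$, the pair $(w,uv)$ witnesses condition \ref{part:R-diagonal-condition} of \myref{Theorem}{thm:R-diagonal} for $uv$, so $v'=uv$ is R-diagonal.

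It then remains to transfer the listed properties from $(v,x)$ to $(v',x')$. If $v$ is \emph{unitary} then $uv$ is a product of unitaries, hence unitary. \emph{Minimality} is immediate from $(v')^*v'=v^*v$ being the support projection of $x=x'$. For \emph{free}, if $v$ and $x$ are $*$-free over $B$, then grouping the freeness of $u,v,x$ as above shows $uv$ is $*$-free over $B$ from $x$. For \emph{normalizing}, if $v$ is unitary with $v^*bv=\theta(b)$, then $uv$ is unitary and $(uv)^*b(uv)=v^*(u^*bu)v=v^*bv=\theta(b)$ (using $u\in B'$), so $uv$ normalizes $B$ with the same automorphism $\theta$. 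For \emph{tracial}, take $u$ inside a trace-preserving copy of $L(\ZZ)\,\overline{\otimes}\,B$; the amalgamated free product of tracial spaces over $B$ is again tracial, so $\tau_B\circ E'$ restricts to a trace on the $*$-algebra generated by $uv$ and $x$.

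The main obstacle is the \emph{standard} case. Here one is given a symmetry $s$ with $x=s|x|$ commuting with $x$, $v$ and $B$, and one needs a symmetry commuting with $v'=uv$; the element $s$ itself will not do this unless $u$ commutes with $s$, which a freely chosen $u$ does not. The remedy I would pursue is to build $u$ so that it \emph{additionally} commutes with $s$. This is not in conflict with $*$-freeness over $B$ from $\{v,x\}$, since $s$ is an external symmetry commuting with all of $N=W^*(\{v,x\}\cup B)$ and is not, in general, an element of $N$; the central projections $(1\pm s)/2$ lie in $N'$, and one can perform the free complementation compatibly with this $\ZZ/2$-grading, producing a Haar unitary $u$ commuting with $B$ and $s$ while remaining $*$-free over $B$ from $N$. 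With such a $u$ the symmetry $s$ commutes with $uv$, $x$ and $B$ and still satisfies $x=s|x|$, making $(v',x')$ standard. Checking carefully that freeness \emph{over $B$} (rather than merely over $W^*(B,s)$) survives this constrained construction is the delicate point, and is where I would expect to spend the most effort.
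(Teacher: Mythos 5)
Your construction is exactly the paper's: adjoin, via an amalgamated free product over $B$, a Haar unitary $w$ commuting with $B$ and $*$-free from $\{v,x\}$ over $B$, and set $(v',x')=(wv,x)$. The paper's proof is only a few lines and cites \myref{Theorem}{thm:R-diagonal} twice; your second paragraph (a second free Haar unitary, the observation that the $B$-valued $*$-distribution of a Haar unitary commuting with $B$ is completely determined by the Haar condition, and that freeness over $B$ determines joint distributions from the marginals) is a correct and complete fleshing-out of the second citation, and your transfers of the tracial, unitary, minimal, free and normalizing properties are all correct and are exactly what the paper intends.

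The one place your write-up is incomplete is the one you flag yourself: the \emph{standard} property. You are right that this is not automatic: in the free product over $B$ the adjoined unitary is $*$-free from, hence does not commute with, the symmetry $s$, so $s$ need not commute with $v'=wv$, and one must either exhibit a different symmetry or pass to an equivalent pair (which the definition of ``standard'' permits). Your proposed remedy --- performing the free complementation so that $w$ also commutes with $s$, e.g.\ by amalgamating over $W^*(B\cup\{s\})$ or by working separately under the central projections $(1\pm s)/2$ --- is a reasonable idea, but, as you say, the verification that such a $w$ remains $*$-free from $\{v,x\}$ \emph{over $B$} (not merely over $W^*(B\cup\{s\})$) is the crux, and you have not carried it out; freeness with amalgamation over a larger algebra does not in general imply freeness over a smaller one. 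For what it is worth, the paper's own proof does not carry this out either: it simply asserts that all six properties ``carry over from $(v,x)$ to $(v',x')$.'' So your proposal matches the paper everywhere the paper actually gives an argument, supplies more detail on the R-diagonality of $v'$, and is more candid than the paper about the one step that genuinely requires additional work.
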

\begin{proof}
This follows directly from \myref{Theorem}{thm:R-diagonal}.
Indeed, if the bipolar decomposition $(v,x)$ is found in a $B$-valued noncommutative probability space $(A',E')$, then by taking an amalgamated free product (over $B$)
with the W$^*$-algebra
$L^\infty[0,1]\otimes B$ and an appropriate conditional expectation,
we may assume that there is a Haar unitary $w\in A'$ that commutes with $B$ is $*$-free from $\{v,x\}$
with respect to $E'$.
By \myref{Theorem}{thm:R-diagonal}, $wvx$ has the same $*$-distribution as $a$.
Thus, $(wv,x)$ is a bipolar decomposition of $a$.
We let $v'=wv$ and $x'=x$.
Invoking  \myref{Theorem}{thm:R-diagonal} again, we have that $v'$ is $B$-valued R-diagonal.
The properties of being tracial, unitary, minimal, standard, free and normalizing carry over from $(v,x)$ to $(v',x')$.
\end{proof}

It is well known and follows from Proposition~2.6 of~\cite{NSS01} that, in the scalar-valued setting (namely, when $B=\CC$), every tracial R-diagonal element $a$
has the same $*$-distribution as $uh$ in a suitable von Neumann algebra endowed with a normal, faithful, tracial state $\tau$, where $h\ge0$ and where $u$ is
a Haar unitary such that $u$ and $h$ are $*$-free with respect to $\tau$.

An analogous statement does not hold for general $B$.
This follows in the case $B=\CC^2$ by Example~6.8 of~\cite{BD-paper},
which exhibits a tracial circular $\CC^2$-valued random variable
that cannot have a bipolar decomposition that is free.
We will go into more detail about this in \myref{Section}{section:c2}.

The following is part of Proposition~5.5 in \cite{BD-paper}, adapted to the case of W$^*$-noncommutative probability spaces.
The proof of equivalence found in \cite{BD-paper} is easily seen to carry over to this case, by employing the W$^*$-versions of the constructions
of crossed products and amalgamated free products.
\begin{thm}[\cite{BD-paper}]
\label{thm:general-even-decomp}
Let $(A,E)$ be a $B$-valued W$^*$-noncommutative probability space.
Fix a random variable $a\in A$ and an automorphism $\theta$ of $B$.
Then the following are equivalent:
\begin{enumerate}
\item[(i)] $a$ is $B$-valued R-diagonal and for each $k\ge1$, the $k$th order $B$-valued
cumulant maps $\beta_k^{(1)}$ and $\beta_k^{(2)}$ satisfy
\begin{equation}
\begin{aligned}
\label{eqn:R-auto}
   \beta_k^{(2)}(&b_1,\theta(b_2),b_3,\ldots,\theta(b_{2k-2}),b_{2k-1}) \\
   &= \theta(\beta_k^{(1)}(\theta(b_1),b_2,\theta(b_3),\ldots,
         b_{2k-2},\theta(b_{2k-1})))
\end{aligned}
\end{equation}
for all $b_1,\ldots,b_{2k-1}\in B$
\item[(ii)] there exists a bipolar decomposition $(u,x)$ for $a$ 
that is free, even and normalizing, where $u$ is a Haar unitary and
where the implemented automorphism of $B$ is $u^*bu=\theta(b)$.
\item[(iii)] there exists a bipolar decomposition $(u,x)$ for $a$ 
that is free and normalizing, where $u$ is a Haar unitary and
where the implemented automorphism of $B$ is $u^*bu=\theta(b)$.
\end{enumerate}
If the above equivalent conditions hold,
then letting $(A',E')$ be the $B$-valued W$^*$-noncommutative probability space in which the bipolar decomposition $(u,x)$ exists,
the even moments of the self-adjoint element $x$
are given by the formula
\begin{equation}
E'(xb_1xb_2\cdots xb_{2n-1}x)=E(a^*\theta^{-1}(b_1)ab_2a^*\theta^{-1}b_3a\cdots b_{2k-2}a^*\theta^{-1}(b_{2k-1})a),
\end{equation}
which holds for all $n\in\NN$ and all $b_1,\ldots,b_{2n-1}\in B$.
\end{thm}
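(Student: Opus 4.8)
The plan is to prove the cycle of implications (i)$\Rightarrow$(ii)$\Rightarrow$(iii)$\Rightarrow$(i), with the even-moment formula dropping out of the computations. The implication (ii)$\Rightarrow$(iii) is immediate, since a free, even, normalizing bipolar decomposition with $u$ a Haar unitary is in particular free and normalizing. The even-moment formula is equally quick once a decomposition as in (ii) or (iii) is in hand: substituting $a\sim ux$ and $a^*\sim xu^*$ and using that the normalizing relation $u^*bu=\theta(b)$ gives $u^*\theta^{-1}(b)u=b$, each factor $a^*\theta^{-1}(b_{2i-1})a=xu^*\theta^{-1}(b_{2i-1})ux$ collapses to $xb_{2i-1}x$, so that $E(a^*\theta^{-1}(b_1)a\,b_2\,a^*\theta^{-1}(b_3)a\cdots a)$ becomes $E'(xb_1xb_2xb_3x\cdots x)$.

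For (iii)$\Rightarrow$(i) I would first check that $a\sim ux$ is $B$-valued R-diagonal. A normalizing Haar unitary is R-diagonal by \myref{Theorem}{thm:unitaries}, so $u$ is R-diagonal. Enlarging $(A',E')$ by an amalgamated free product to introduce a Haar unitary $w$ that commutes with $B$ and is $*$-free from $\{u,x\}$, the elements $w,u,x$ become mutually $*$-free over $B$; applying \myref{Theorem}{thm:R-diagonal}\ref{part:R-diagonal-implication} to the R-diagonal unitary $u$ gives $wu\sim u$, and since $wu$ remains $*$-free from $x$ we get $wa=(wu)x\sim ux=a$, whence $a$ is R-diagonal by \myref{Theorem}{thm:R-diagonal}\ref{part:R-diagonal-condition}. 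The cumulant relation \eqref{eqn:R-auto} I would then extract at the level of moments: substituting $a=ux$ into the two alternating moment families and repeatedly using $u^*bu=\theta(b)$ on the inner factors together with \myref{Lemma}{lem:normalizing}(c) (namely $E(uyu^*)=\theta^{-1}(E(y))$) to absorb the outermost $u,u^*$, one finds that $E(ab_1a^*\cdots a^*)$ collapses to $\theta^{-1}$ applied to a moment of $x$ with the even-position arguments $\theta$-twisted, while $E(a^*b_1a\cdots a)$ collapses to a moment of $x$ with the odd-position arguments $\theta$-twisted. Expressing both families through the single symmetric cumulant sequence $(\kappa_k)$ of the self-adjoint $x$ shows that $\beta_k^{(1)}$ and $\beta_k^{(2)}$ are two $\theta$-twists of the same object, and reading this off gives \eqref{eqn:R-auto} directly.

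For (i)$\Rightarrow$(ii) I would build the decomposition explicitly. Using the W$^*$-crossed product $B\rtimes_\theta\ZZ$, produce a Haar unitary $u$ with $u^*bu=\theta(b)$ in a $B$-valued W$^*$-probability space with its canonical faithful conditional expectation onto $B$. Next, define a candidate even-moment functional by the right-hand side of the even-moment formula, $E(a^*\theta^{-1}(b_1)a\,b_2\cdots a)$, and realize it as the $*$-distribution of an even self-adjoint element $x$ in some $(D,E_D)$; its complete positivity, hence realizability after a doubling of the kind used earlier to force evenness, follows from that of $a$, since it is obtained from the completely positive map $c\mapsto E(a^*\,c\,a)$ composed with the automorphism $\theta^{-1}$. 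Forming the amalgamated free product $(A',E')=(B\rtimes_\theta\ZZ)*_B(D,E_D)$ makes $u$ and $x$ $*$-free over $B$, so $(u,x)$ is a free, even, normalizing decomposition with $u$ a Haar unitary. Finally, running the computation of (iii)$\Rightarrow$(i) in reverse expresses every alternating $*$-moment of $ux$ through the prescribed even moments of $x$: by construction the $a^*aa^*a\cdots$ family matches that of $a$, and the $aa^*aa^*\cdots$ family matches that of $a$ precisely because hypothesis \eqref{eqn:R-auto} holds. Since an R-diagonal element is determined by these alternating moments, $ux$ and $a$ have the same $B$-valued $*$-distribution.

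The step I expect to be the main obstacle is the bookkeeping in the moment-collapse computation shared by (iii)$\Rightarrow$(i) and (i)$\Rightarrow$(ii): tracking which positions acquire $\theta$ versus $\theta^{-1}$ as the outer $u,u^*$ are absorbed through \myref{Lemma}{lem:normalizing} and the inner ones through $u^*bu=\theta(b)$, and then verifying that the resulting identity has exactly the shape of \eqref{eqn:R-auto}. A secondary technical point is confirming that the even-moment functional of (i)$\Rightarrow$(ii) is genuinely the $*$-distribution of a self-adjoint element in a W$^*$-space with faithful conditional expectation; this is where the W$^*$-versions of the crossed product and amalgamated free product, and the positivity inherited from $a$, are essential.
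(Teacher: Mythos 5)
The paper does not actually prove this theorem: it is quoted from Proposition~5.5 of \cite{BD-paper}, with only the remark that the proof carries over to the W$^*$-setting by using the W$^*$-versions of the crossed product and amalgamated free product constructions. Your outline is a correct reconstruction along exactly those lines --- the crossed product $B\rtimes_\theta\ZZ$ supplying the normalizing Haar unitary, the amalgamated free product imposing $*$-freeness of $u$ and $x$, the telescoping identity $a^*\theta^{-1}(b)a = xbx$ giving the even-moment formula, and the $\theta$-twisted bookkeeping of alternating moments yielding~\eqref{eqn:R-auto} --- so it matches the intended argument in essentially every respect.
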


We do not obtain a free polar decomposition in the above theorem.
The following example demonstrates that a $B$-valued R-diagonal element
satisfying the cumulant condition~\eqref{eqn:R-auto} need not admit a free
polar decomposition with a normalizing unitary.
This is the example from the erratum of~\cite{BD-paper}.

\begin{ex}
\label{ex:even-no-polar}
Let $a$ be a circular element in a scalar-valued W$^*$-non\-com\-mu\-ta\-tive
probability space $(A_0,\tau_0)$ where $\tau_0$ is a normal, faithful, tracial state,
with $\tau_0(a^*a)=1$.
Consider the polar decomposition $a=w|a|$ of $a$.
By Proposition~2.6 from~\cite{V-paper}, $w$ is unitary and, in fact, a Haar unitary.

Let $B$ be a W$^*$-algebra different from $\CC$ with a normal, faithful, tracial
state $\tau_B$.
Consider the von Neumann algebra free product
\[
(A,\tau)=(A_0,\tau_0)*(B,\tau_B)
\]
and let
$E:A\to B$ be the $\tau$-preserving conditional expectation.
Then $(A,E)$ is a $B$-valued W$^*$-noncommutative probability space and
$\tau\circ E = \tau$.
Theorem~12 of \cite{SS-paper} gives a way to express the $B$-valued cumulants
of $a$ in terms of its scalar cumulants and the trace $\tau_B$.
In particular, this implies that $a$ is a $B$-valued circular element whose cumulant
maps $\beta_k^{(1)}$ and $\beta_k^{(2)}$ satisfy
\[
   \beta_1^{(1)}(b)=\beta_1^{(2)}(b) = \tau_B(b)1_B
\]
for all $b\in B$ and $\beta_k^{(1)}=\beta_k^{(2)}=0$ for all $k\ge 2$.
Thus, $a$ satisfies condition~(i) of \myref{Theorem}{thm:general-even-decomp} whenever $\theta$ is $\tau_B$-preserving; i.e.,
$\tau_B\circ\theta=\tau_B$.

Since the polar decomposition of $a$ is $w|a|$ and since $w$ is $*$-free from $B$ with respect to $\tau$, it cannot normalize $B$.
Indeed, let $b\in B$ and $\tau_B(b)=0$.
Then for all $b'\in B$, $\tau(w^*bwb')=0$ by freeness, so $w^*bw\notin B$.
\end{ex}

\begin{rem}\label{rem:utheta}
Without covering all details, 
let us describe an explicit realization of the normalizing free bipolar decomposition of the circular element $a$ from \myref{Example}{ex:even-no-polar}
that is guaranteed to exist
by \myref{Theorem}{thm:general-even-decomp}, for each $\tau_B$-preserving automorphism $\theta$.
Consider the W$^*$-algebra crossed product $B\rtimes_\theta\ZZ$.
Let $E_B:B\rtimes_\theta\ZZ\to B$ be the usual normal conditional expectation and take the trace $\tau_1=\tau_B\circ E_B$.
Let $u$ be the unitary in $B\rtimes_\theta\ZZ$ that implements the automorphism $\theta$.
Let $\tau_2$ be the trace on $L^\infty([-2,2])$ given by integration with respect to Lebesgue measure and let $x\in L^\infty([-2,2])$
be a self-adjoint element that is semicircular with respect to $\tau_2$.
Take the W$^*$-algebra free product
\[
(A,\tau)=(B\rtimes_\theta\ZZ,\tau_1)*(L^\infty([-2,2]),\tau_2)
\]
and consider $c=ux\in A$.
Let $E:A\to B$ be the $\tau$-preserving conditional expectation.
One easily sees that $u$ and $x$ are $*$-free over $B$ with respect to $E$.
Thus, $(u,x)$ is a free normalizing bipolar decomposition for $c$, with $u$ implementing the automorphism $\theta$ and with $x$ even.
The polar decomposition of $c$ is $(us)|x|$, where $s\in L^\infty[-2,2]$ is a symmetry with $\tau_2(s)=0$.
In order to show that $c$ has the same $B$-valued $*$-distribution as the circular operator $a$ described at the begining of this example,
we must see that, with respect to the trace $\tau$,
\begin{enumerate}[label=(\roman*)]
\item\label{part:usHaar} $us$ is a Haar unitary
\item\label{part:us|x|free} $us$ and $|x|$ are $*$-free
\item\label{part:cBfree} $c$ is $*$-free from $B$.
\end{enumerate}
Part~\ref{part:usHaar} is immediate, by freeness of $u$ and $s$, since both have trace zero.
Part~\ref{part:us|x|free} follows in straightforward manner, using that $u$ and $\{s,|x|\}$ are free and that $s$ is a symmetry that commutes with $|x|$.
Since we already know that $|x|$ and $B$ are free and have proved part~\ref{part:us|x|free},
in order to prove part~\ref{part:cBfree}, it will suffice to show that
$us$ and $B$ are $*$-free.
This follows by considering words and using that $s$ is free from $B$, that $uB\subseteq\ker\tau_1$
and that conjugation with $u$ implements a $\tau_B$-preserving automorphism of $B$.
\end{rem}

Bipolar decompositions that are even and normalizing are not unique in general.
This is clear upon revisiting \myref{Example}{ex:even-no-polar}, which is an
example of a $B$-valued circular $a$ whose cumulants satisfy~\eqref{eqn:R-auto}
for every trace preserving automorphism $\theta$.
Therefore \myref{Theorem}{thm:general-even-decomp} says that, for each of these
automorphisms $\theta$, there exists a bipolar decomposition
$(u_\theta,x_\theta)$ of $a$ that is free, even and normalizing, where $u_\theta$ is a Haar unitary and
$u_\theta^*bu_\theta^{}=\theta(b)$ for all $b\in B$.
By taking different $\theta$, one obtains different bipolar decompositions of $a$ that are even and normalizing.
(See \myref{Remark}{rem:utheta}.)
This can happen even when $B=\CC^2$.

The next result shows that under certain natural nondegeneracy conditions,
if one free bipolar decomposition is normalizing, then all other free bipolar decompositions
are normalizing and implement the same automorphism.
\begin{thm}
\label{thm:degen}
Let $a$ be a $B$-valued random variable in a $B$-valued W$^*$-non\-com\-mu\-ta\-tive
probability space $(A,E)$
and assume that either of the subspaces
\begin{equation}
\label{eqn:degen}
\lspan\{E((a^*a)^k) \mid k\ge0\}
\text{ or }
\lspan\{E((aa^*)^k) \mid k\ge0\}
\end{equation}
is weakly dense in $B$.
Suppose that $a$ has free bipolar decompositions $(u,x)$ and $(\ut,\xt)$ in $B$-valued W$^*$-noncommutative probability spaces $(A',E')$ and $(\At,\Et)$,
respectively, with $\Et$ faithful.
Suppose $u$ and $\ut$ are unitaries satisfying $E'(u)=0=\Et(\ut)$ and suppose that $u$ normalizes $B$.
Then $\ut$ normalizes $B$, and induces the same automorphism, namely, $\ut^*b\ut=u^*bu$ for all $b\in B$.
\end{thm}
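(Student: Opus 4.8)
\emph{The plan.}
I would extract from the common $B$-valued $*$-distribution of $a$ enough to pin down the map $c\mapsto\ut c\ut^*$ on $B$, and then promote this moment-level information to an operator identity using faithfulness of $\Et$. Write $\theta(b)=u^*bu$; since $u$ normalizes $B$, $\theta$ is a normal $*$-automorphism of $B$ with $\theta^{-1}(c)=ucu^*$, and the desired conclusion is exactly $\ut^*b\ut=\theta(b)$, equivalently $\ut c\ut^*=\theta^{-1}(c)$ for all $c\in B$. The basic difficulty is that $a=\ut\xt$ never exhibits $\ut$ in isolation: every $*$-moment of $a$ pairs $\ut$ with $\xt$. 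The way around this is to look at $aa^*=\ut\xt^2\ut^*$, where the two copies of $\ut$ straddle the entire $\xt^2$-block, so that freeness can collapse the $\xt$-part.

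Concretely, set $d_\ell=E((a^*a)^\ell)\in B$; these are determined by the distribution of $a$, and since $u,\ut$ are unitary, $a^*a$ equals the square of the self-adjoint part in each decomposition, so $d_\ell=E'(x^{2\ell})=\Et(\xt^{2\ell})$. The key step is to compute $E((aa^*)^\ell)$ in both decompositions. Using $(aa^*)^\ell=\ut\,\xt^{2\ell}\,\ut^*$ and centering $\xt^{2\ell}=\mathring{\xt^{2\ell}}+d_\ell$, freeness of $\ut$ and $\xt$ over $B$ together with $\Et(\ut)=0$ kills the alternating centered term $\Et(\ut\,\mathring{\xt^{2\ell}}\,\ut^*)$, leaving
\[
E((aa^*)^\ell)=\Et(\ut\,d_\ell\,\ut^*)=\Phi(d_\ell),\qquad \Phi(c):=\Et(\ut c\ut^*).
\]
The identical computation in the normalizing decomposition, using $E'(u)=0$ and $u\,d_\ell\,u^*=\theta^{-1}(d_\ell)\in B$, gives $E((aa^*)^\ell)=\theta^{-1}(d_\ell)$; note this uses freeness only, so no trace hypothesis is needed and \myref{Lemma}{lem:normalizing} is not invoked. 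Comparing, $\Phi(d_\ell)=\theta^{-1}(d_\ell)$ for every $\ell\ge0$. The same normalizing computation identifies the second candidate subspace as $\lspan\{E((aa^*)^\ell)\}=\theta^{-1}(\lspan\{d_\ell\})$, and since $\theta$ is a normal automorphism (a weak homeomorphism of $B$), one of the two subspaces in \eqref{eqn:degen} is weakly dense iff the other is; thus in either case $\lspan\{d_\ell\}$ is weakly dense. As $\Phi$ and $\theta^{-1}$ are normal maps $B\to B$ agreeing on $\{d_\ell\}$, they agree on the weakly dense span and hence on all of $B$, so $\Phi=\theta^{-1}$.

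Finally I would upgrade the equality of maps $\Phi=\theta^{-1}$ to the operator identity $\ut c\ut^*=\theta^{-1}(c)$ using faithfulness of $\Et$. For $c\in B$ expand
\[
\Et\big((\ut c\ut^*-\theta^{-1}(c))^*(\ut c\ut^*-\theta^{-1}(c))\big),
\]
using $\ut^*\ut=1$, the $B$-bimodule property of $\Et$, and $\Phi=\theta^{-1}$ to evaluate the four terms; everything reduces to $\theta^{-1}(c^*c)-\theta^{-1}(c^*)\theta^{-1}(c)$, which vanishes because $\theta^{-1}$ is multiplicative and $*$-preserving. Faithfulness of $\Et$ then forces $\ut c\ut^*=\theta^{-1}(c)$ for all $c\in B$; substituting $c=\theta(b)$ gives $\ut^*b\ut=\theta(b)=u^*bu$, as required.

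\emph{Main obstacle.} I expect the crux to be the observation in the first paragraph: because $\ut$ cannot be separated from $\xt$ in any moment of $a$, one must select exactly the moment $(aa^*)^\ell=\ut\,\xt^{2\ell}\,\ut^*$, in which freeness collapses the interior and thereby exposes the single-variable map $\Phi(c)=\Et(\ut c\ut^*)$. Once $\Phi$ is identified with $\theta^{-1}$ on a weakly dense set, the density and faithfulness steps are routine. The point requiring care is that traciality is not assumed, so the evaluations of $E'(u\,d_\ell\,u^*)$ and $E((aa^*)^\ell)$ must be carried out by freeness rather than by \myref{Lemma}{lem:normalizing}.
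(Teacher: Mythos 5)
Your proposal is correct and follows essentially the same route as the paper's proof: compute $E((aa^*)^k)$ in both decompositions by centering the $\xt^{2k}$-block and using freeness together with $E'(u)=0=\Et(\ut)$, conclude $\Et(\ut\, b\,\ut^*)=\theta^{-1}(b)$ on the weakly dense span (noting that $\theta$ being an automorphism makes density of either subspace in \eqref{eqn:degen} equivalent to density of the other), and then upgrade to the operator identity via the expansion of $\Et\bigl((\ut b\ut^*-\theta^{-1}(b))^*(\ut b\ut^*-\theta^{-1}(b))\bigr)$ and faithfulness of $\Et$. Your explicit remarks that normality of the maps is what permits passing from the dense span to all of $B$, and that no traciality is needed, are accurate refinements of steps the paper leaves implicit.
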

\begin{proof}
Let $\theta$ be the automorphism of $B$ induced by $u$, namely, $u^*bu=\theta(b)$.
We have
\begin{multline*}
E((aa^*)^k)= E'((uxxu^*)^k)=E'(ux^{2k}u^*)=E'(u(E'(x^{2k}))u^*) \\
=\theta^{-1}(E'(x^{2k}))=\theta^{-1}(E'(((xu^*)(ux))^k)=\theta^{-1}(E((a^*a)^k),
\end{multline*}
where for the third equality we used $*$-freeness of $u$ and $x$ as well as $E'(u)=0$. 
Since $\theta$ is an automorphism, density of either of the subspaces in~\eqref{eqn:degen} implies density of the other.
We also get
\begin{multline*}
\Et(\ut(E((a^*a)^k)\ut^*)=\Et(\ut(\Et(x^{2k}))\ut^*)=\Et(\ut\xt^{2k}\ut^*) \\
=E((aa^*)^k)=\theta^{-1}(E((a^*a)^k)),
\end{multline*}
where for the third equality we used $*$-freeness of $\ut$ and $\xt$ as well as $\Et(\ut)=0$.
By the density of the subspaces in~\eqref{eqn:degen}, we have
\[
\Et(\ut b \ut^*)=\theta^{-1}(b)
\]
for all $b\in B$.
Thus,
\begin{align*}
\Et((\ut b&\ut^*-\theta^{-1}(b))^*(\ut b\ut^*-\theta^{-1}(b))) \\
&=\Et(\ut b^*b\ut^*)-\theta^{-1}(b^*)\Et(\ut b\ut^*)-\Et(\ut b^*\ut^*)\theta^{-1}(b)+\theta^{-1}(b^*)\theta^{-1}(b) \\
&=\Et(\ut b^*b\ut^*)-\theta^{-1}(b^*b)=0.
\end{align*}
By faithfulness of $\Et$, we conclude $\ut b\ut^*=\theta^{-1}(b)$ for all $b\in B$.
\end{proof}

\section{$\CC^2$-valued circular elements}
\label{section:c2}

Recall that a $B$-valued random variable $a$ is called
\textit{$B$-valued circular} if $\alpha_j=0$ whenever
$j\in J\setminus\{(1,2),(2,1)\}$, where $J=\bigcup_{n\ge 1}\{1,2\}^n$, and
$(\alpha_j)_{j\in J}$ are the cumulant maps associated to the pair
$(a_1,a_2)=(a,a^*)$.

Proposition 5.1 from \cite{BD-paper} implies that if
$a$ is a $B$-valued circular element with associated cumulant maps
$\alpha_{(1,2)}$ and $\alpha_{(2,1)}$ and if $\tau$ is a trace on $B$,
then $\tau\circ E$ is a trace on $\Alg(B\cup\{a,a^*\})$ if and only if
\begin{equation}
\label{eqn:tracial}
   \tau(\alpha_{(1,2)}(b_1)b_2) = \tau(b_1\alpha_{(2,1)}(b_2))
   \qquad \text{for all $b_1,b_2\in B$}.
\end{equation}

Our main result in this section, for the case $B=\CC^2$,
concerns $\CC^2$-valued circular elements with the tracial property~\eqref{eqn:tracial}.
\begin{thm}
\label{thm:main}
Let $a$ be a $\CC^2$-valued circular random variable that is tracial and has a free bipolar decomposition $(u,x)$, where
$u$ is unitary.
Then
\begin{equation}
\label{eqn:original-auto}
\alpha_{(2,1)} = \theta \circ \alpha_{(1,2)} \circ \theta
\end{equation}
for some automorphism $\theta$ on $\CC^2$.
\end{thm}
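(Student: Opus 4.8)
The plan is to convert the existence of a free bipolar decomposition with $u$ unitary into algebraic constraints on the pair of cumulant maps $(\alpha_{(1,2)},\alpha_{(2,1)})$, and then to observe that for $B=\CC^2$ these constraints force \eqref{eqn:original-auto}, where $\theta$ is one of the only two automorphisms of $\CC^2$, namely the identity or the flip $F$.

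First I would normalize the decomposition. By Lemma~\ref{lem:takeR-diag} I may replace $(u,x)$ by a free, unitary bipolar decomposition in which $u$ is in addition $B$-valued R-diagonal; by Theorem~\ref{thm:unitaries} such a $u$ is then a Haar unitary, so $E'(u)=0$, which streamlines all the cumulant bookkeeping. Writing $\Phi(b)=E'(ubu^*)$, $\Psi(b)=E'(u^*bu)$ and $\eta(b)=E'(xbx)$, a short computation with $*$-freeness of $u$ and $x$ over $B$ gives the second-order identities
\[
   \alpha_{(1,2)}=\Phi\circ\eta,\qquad \alpha_{(2,1)}=\eta\circ\Psi .
\]
Indeed $\alpha_{(1,2)}(b)=E'(u\,xbx\,u^*)$; centering the inner word $xbx$ and using that $*$-freeness makes the expectation of an alternating centered triple (from the $u$-, $x$-, and $u$-subalgebras) vanish leaves only $\Phi(\eta(b))$, and symmetrically for $\alpha_{(2,1)}$. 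Thus the target \eqref{eqn:original-auto} becomes the matrix identity $\eta\circ\Psi=\theta\circ\Phi\circ\eta\circ\theta$ among three linear self-maps of $\CC^2$.

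Next I would record the structural constraints. Each of $\Phi,\Psi,\eta$ is a $2\times2$ matrix; since $x^*=x$ and $u$ is unitary, $\eta$ is positive and $\Phi,\Psi$ are positive and unital, so as matrices $\Phi,\Psi$ are row-stochastic and $\eta$ has nonnegative entries. Traciality supplies more: because $\tau_B\circ E'$ is a trace, $\tau_B(\Phi(b)c)=\tau_B(b\Psi(c))$ for all $b,c$, so $\Psi$ is the $\tau_B$-adjoint of $\Phi$; and rewriting \eqref{eqn:tracial} through $\alpha_{(1,2)}=\Phi\eta$, $\alpha_{(2,1)}=\eta\Psi$ constrains $\eta$ to be $\tau_B$-self-adjoint on the range of $\Psi$. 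Finally, since $a^*a=x^2$ and $aa^*=ux^2u^*$ with $u^*u=1$, one obtains the family of intrinsic identities $E\big((aa^*)^k\big)=\Phi\big(E((a^*a)^k)\big)$ for all $k$, tying $\Phi$ to the (circular-determined) moments of $a$; under the nondegeneracy that these moments span $\CC^2$ this pins $\Phi$, and hence $\Psi$, almost completely.

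The heart of the argument is then to exploit circularity, namely that every $B$-valued $*$-cumulant of $a=ux$ of order at least three vanishes. Expanding these cumulants via the operator-valued formula for cumulants of a product in terms of the $B$-valued cumulants of the free elements $u$ and $x$ produces a system of polynomial equations in the entries of $\Phi,\Psi,\eta$ together with the higher moment data of $u$ and $x$. Adjoining the structural relations above, one eliminates the auxiliary moment parameters and is left with conditions purely on $(\alpha_{(1,2)},\alpha_{(2,1)})$; the claim to verify is that the resulting variety is exactly the union of the two loci $\alpha_{(2,1)}=\alpha_{(1,2)}$ and $\alpha_{(2,1)}=F\alpha_{(1,2)}F$, which is precisely \eqref{eqn:original-auto}. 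I expect the main obstacle to be exactly this reduction-and-elimination: the $B$-valued distributions of $u$ and $x$ a priori involve infinitely many moments, so the real work is to show that only finitely many of the vanishing-cumulant equations are needed to force the two-automorphism dichotomy, and then to solve the resulting finite polynomial system. This is feasible only because $\dim_{\CC}B=2$, and it is carried out in Mathematica~\cite{Mat}. Once \eqref{eqn:original-auto} is established, Theorem~\ref{thm:general-even-decomp} upgrades the conclusion to the existence of a free \emph{normalizing} bipolar decomposition, as promised.
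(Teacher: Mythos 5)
Your overall strategy --- normalize $u$ to be Haar via \myref{Lemma}{lem:takeR-diag} and \myref{Theorem}{thm:unitaries}, translate freeness, circularity and traciality into polynomial constraints on the cumulant data, and let Mathematica check that these constraints force one of the two automorphisms of $\CC^2$ --- is the same as the paper's, and your structural identities $\alpha_{(1,2)}=\Phi\circ\eta$, $\alpha_{(2,1)}=\eta\circ\Psi$ and $E((aa^*)^k)=\Phi\bigl(E((a^*a)^k)\bigr)$ are correct and do appear in the paper (the last in the form $g_1(n)=E(u\,g_2(n)\,u^*)$). But there is a genuine gap exactly where you flag ``the main obstacle'': you propose to obtain your equations by expanding the vanishing higher cumulants of $a=ux$ in terms of the $B$-valued distributions of $u$ and $x$, which involve infinitely many unknown moment parameters, and you give no mechanism for reducing to a finite polynomial system --- you simply assert that Mathematica carries it out. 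That reduction is the mathematical content of the proof and cannot be delegated to the computer. The paper's mechanism is different: it argues by contradiction, assuming \eqref{eqn:original-auto} fails, and shows that this very assumption forces $g_2(0)=1$ and $g_2(1)=E(a^*a)$ to be linearly independent, hence a basis of $\CC^2$. Consequently the trilinear map $M(b_1\otimes b_2\otimes b_3)=E(ub_1u^*b_2ub_3u^*)$ is determined by its eight values on triples from $\{g_2(0),g_2(1)\}$, while on the other hand $M_0(n,m,k)=M(g_2(n)\otimes g_2(m)\otimes g_2(k))$ can be computed for every $(n,m,k)$ from the circular recursions for $g_1,g_2,G,H$ together with the maps $N_1,N_2$, all expressed in the five parameters $q,r_{11},r_{12},r_{21},r_{22}$ (plus two extra parameters for $N_2$ in the degenerate case where $\alpha_{(1,2)}$ is not invertible). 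Equating the two computations at finitely many well-chosen triples $(n,m,k)$ yields the finite system that Reduce then shows is inconsistent with positivity of the parameters and $q\in(0,1)$.

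A secondary point: you treat the spanning condition $\lspan\{E((a^*a)^k)\mid k\ge0\}=\CC^2$ as a nondegeneracy hypothesis (``under the nondegeneracy that these moments span $\CC^2$ this pins $\Phi$''), but \myref{Theorem}{thm:main} carries no such hypothesis; in the paper that spanning is a consequence of the negation of the conclusion, which is why the argument must be run as a proof by contradiction rather than as the direct description of a variety that you sketch.
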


Combining this with \myref{Theorem}{thm:general-even-decomp} and \myref{Theorem}{thm:degen}, we have:

\begin{cor}
\label{cor:C2-free-even-decomp}
Let $a$ be a $\CC^2$-valued circular random variable in a $\CC^2$-valued
W$^*$-noncommutative probability space $(A,E)$ and assume that $a$ is tracial.
Then $a$ has a free bipolar decomposition $(u,x)$ with $u$ unitary if and only if
it has a normalizing bipolar decomposition that is also free and even.
Moreover, suppose
\[
\lspan\{E((a^*a)^k) \mid k\ge0\}=\CC^2
\text{ or }
\lspan\{E((aa^*)^k) \mid k\ge0\}=\CC^2.
\]
Then for every free bipolar decomposition $(u,x)$ of $a$ in a
$\CC^2$-valued W$^*$-noncommutative probability space $(A',E')$, where $E'$ is
faithful and $u$ is unitary, the unitary $u$ normalizes $\CC^2$ (namely, the copy of $\CC^2$ that is the image of $E'$).
\end{cor}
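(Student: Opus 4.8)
The plan is to deduce the corollary from \myref{Theorem}{thm:main}, \myref{Theorem}{thm:general-even-decomp}, and \myref{Theorem}{thm:degen} in a purely formal way; all of the difficult computation is already packaged inside \myref{Theorem}{thm:main}. For the equivalence, the backward implication is immediate, since a normalizing bipolar decomposition is by definition unitary, so a free, even, normalizing decomposition is in particular a free bipolar decomposition with $u$ unitary. For the forward implication, suppose $a$ has a free bipolar decomposition with $u$ unitary. As $a$ is tracial, \myref{Theorem}{thm:main} applies and furnishes an automorphism $\theta$ of $\CC^2$ satisfying~\eqref{eqn:original-auto}. Because $a$ is circular, its only nonvanishing $*$-cumulants are $\alpha_{(1,2)}=\beta_1^{(1)}$ and $\alpha_{(2,1)}=\beta_1^{(2)}$; hence the cumulant identity~\eqref{eqn:R-auto} of \myref{Theorem}{thm:general-even-decomp} holds trivially for $k\ge2$ (both sides vanish) and reduces for $k=1$ to exactly~\eqref{eqn:original-auto}. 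Thus $a$ satisfies condition~(i) of \myref{Theorem}{thm:general-even-decomp} for this $\theta$, and the implication (i)$\implies$(ii) yields a free, even, normalizing bipolar decomposition $(u_0,x_0)$ of $a$, where $u_0$ is a Haar unitary with $u_0^*bu_0=\theta(b)$. This proves the equivalence, and I record for later use that $E'(u_0)=0$ since $u_0$ is a Haar unitary.

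For the ``moreover'' statement, fix a free bipolar decomposition $(u,x)$ of $a$ in $(A',E')$, with $E'$ faithful and $u$ unitary. The obstruction to applying \myref{Theorem}{thm:degen} directly is its hypothesis $E'(u)=0$, which we cannot expect of an arbitrary $u$. To circumvent this I will Haar-ify $u$, exploiting R-diagonality. Enlarging $(A',E')$ by an amalgamated free product over $\CC^2$ (exactly as in the proof of \myref{Lemma}{lem:takeR-diag}), I may assume there is a Haar unitary $w$ that commutes with $\CC^2$ and is $*$-free from $\{u,x\}$, and that the enlarged conditional expectation, still written $E'$, is faithful. Since $a$ is R-diagonal, \myref{Theorem}{thm:R-diagonal}\ref{part:R-diagonal-implication} shows that $wux$ has the same $\CC^2$-valued $*$-distribution as $a$, so $(wu,x)$ is again a free bipolar decomposition of $a$ (freeness of $wu$ and $x$ being part of the same standard construction), and $wu$ is unitary. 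A short computation with freeness, using $E'(w)=0$ and the $*$-freeness of $w$ from $u$, gives $E'(wu)=0$.

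Now I apply \myref{Theorem}{thm:degen} to the two free bipolar decompositions $(u_0,x_0)$ and $(wu,x)$. All of its hypotheses hold: both unitaries have vanishing expectation, the relevant conditional expectation is faithful, the density condition is the standing assumption of the corollary, and $u_0$ normalizes $\CC^2$. The conclusion is that $wu$ normalizes $\CC^2$, with $(wu)^*b(wu)=u_0^*bu_0=\theta(b)$ for all $b\in\CC^2$. Finally I transfer this back to $u$: because $w$ commutes with $\CC^2$ we have $w^*bw=b$, so
\[
\theta(b)=(wu)^*b(wu)=u^*w^*bwu=u^*bu
\]
for every $b\in\CC^2$, whence $u^*bu=\theta(b)\in\CC^2$ and $u$ normalizes $\CC^2$, as claimed.

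The one step that is not mere bookkeeping is this Haar-ification maneuver in the ``moreover'' part. It is precisely what brings \myref{Theorem}{thm:degen} to bear on the given unitary $u$ despite $E'(u)$ possibly being nonzero, and it works because $a$ is R-diagonal (so premultiplying by a free Haar unitary commuting with $\CC^2$ leaves the $*$-distribution of $a$ unchanged) and because such a $w$ commutes with $\CC^2$ (so that the normalizing property passes from $wu$ back to $u$).
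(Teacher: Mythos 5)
Your proof is correct and follows exactly the route the paper intends: the paper offers no written proof beyond the remark that the corollary follows by combining \myref{Theorem}{thm:main} (which, for a circular element, gives condition~(i) of \myref{Theorem}{thm:general-even-decomp} at $k=1$, the only nontrivial order) with \myref{Theorem}{thm:general-even-decomp} and \myref{Theorem}{thm:degen}. Your one genuine addition -- replacing $u$ by $wu$ for a free Haar unitary $w$ commuting with $\CC^2$ so that the hypothesis $\Et(\ut)=0$ of \myref{Theorem}{thm:degen} is met, and then transferring the normalizing property back to $u$ via $w^*bw=b$ -- is a necessary detail the paper leaves implicit, and you handle it correctly using the mechanism of \myref{Lemma}{lem:takeR-diag}.
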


Our proof of \myref{Theorem}{thm:main} relies on intricate calculations
for $\CC^2$-valued random variables.
However, we first prove some more general results that we will using during these calculations, namely,\myref{Lemmas}{lem:1}, \ref{lem:g1g2}, \ref{lem:m0} and~\ref{lem:GH}, below.

\begin{lem}
\label{lem:1}
Let $B$ be a unital $*$-algebra and $(A,E)$ a $B$-valued $*$-non\-com\-mu\-ta\-tive probability space.
Suppose $y=ux$ for $u,x\in A$ with $u$ a unitary satisfying $E(u)=0$, with $x=x^*$
and with $u$ and $x$ $*$-free.
Then for all integers
$n,m,k\ge 0$, we have
\begin{equation*}
   E((yy^*)^n)=E(u E(x^{2n})u^*)=E(u E((y^*y)^n)u^*)
\end{equation*}
and
\begin{equation}
\label{eq:Exes}
\begin{aligned}
   E((yy^*)^n(y^*y)^m(yy^*)^k)
      &= E(uE(x^{2n}E(u^*E(x^{2m})u)x^{2k})u^*) \\
      &\quad -E(uE(x^{2n})E(u^*E(x^{2m})u)E(x^{2k})u^*) \\
      &\quad +E(uE(x^{2n})u^*E(x^{2m})uE(x^{2k})u^*)
\end{aligned}
\end{equation}
\end{lem}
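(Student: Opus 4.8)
The plan is to reduce everything to mixed moments of $u$ and $x$ and then exploit $*$-freeness over $B$. Since $x=x^*$ and $u^*u=1$, we have $y^*y=xu^*ux=x^2$ and $yy^*=ux\,xu^*=ux^2u^*$, whence $(yy^*)^n=ux^{2n}u^*$ and $(y^*y)^m=x^{2m}$. Writing $P=x^{2n}$, $Q=x^{2m}$, $R=x^{2k}$, the left-hand side of~\eqref{eq:Exes} becomes $E(uPu^*QuRu^*)$, while the first asserted identity becomes $E(uPu^*)=E(uE(P)u^*)$, the last equality $E(uE((y^*y)^n)u^*)=E(uE(x^{2n})u^*)$ being immediate from $(y^*y)^n=x^{2n}$.

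The workhorse is the collapse identity
\[
E(uZu^*)=E(uE(Z)u^*) \quad\text{and}\quad E(u^*Zu)=E(u^*E(Z)u),
\]
valid for every $Z$ in the $*$-subalgebra generated by $B$ and $x$. To see this, write $Z=\bub Z+E(Z)$ with $\bub Z=Z-E(Z)\in\ker E$; then $u,\bub Z,u^*$ is a word whose letters are centered (using $E(u)=0=E(u^*)$) and come alternately from the $*$-algebras generated by $B\cup\{u\}$ and by $B\cup\{x\}$, so $E(u\bub Z u^*)=0$ by $*$-freeness over $B$, leaving $E(uZu^*)=E(uE(Z)u^*)$. Applied to $Z=x^{2n}$ this gives the first displayed identity of the lemma, and it also identifies the scalar $s:=E(u^*E(Q)u)=E(u^*qu)\in B$ (where $q=E(Q)$) that appears in~\eqref{eq:Exes}.

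For~\eqref{eq:Exes} I would expand $E(uPu^*QuRu^*)$ by centering each of $P,Q,R$ as $P=\bub P+p$, $Q=\bub Q+q$, $R=\bub R+r$ (with $p=E(P)$, etc.), producing $2^3=8$ terms. In each term the factors lying in $B$ are absorbed into the neighbouring copies of $u$, the resulting conjugated unitary blocks (such as $u^*qu$, $upu^*$, $uru^*$) are recentered, and $*$-freeness is invoked: any word that is alternating with all letters centered has vanishing $E$. Carrying out this bookkeeping, six of the eight terms vanish and only two survive: the term $(\bub P,q,\bub R)$ collapses, after recentering the block $u^*qu=s+\bub{(u^*qu)}$ and then the $x$-algebra product $\bub P\,s\,\bub R$ and discarding the centered pieces, to $E(uE(\bub P\,s\,\bub R)u^*)$; and the term $(p,q,r)$, consisting entirely of $B$-valued factors, gives $E(upu^*quru^*)=E(uE(P)u^*E(Q)uE(R)u^*)$. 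Finally, expanding $E(\bub P\,s\,\bub R)=E(PsR)-E(P)\,s\,E(R)$ rewrites the first surviving term as $E(uE(PsR)u^*)-E(uE(P)\,s\,E(R)u^*)$, and assembling the pieces yields exactly the three terms on the right-hand side of~\eqref{eq:Exes}.

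The main obstacle is the bookkeeping in the third paragraph: one must track, for each of the eight expansion terms, how the $B$-valued factors merge into the unitary blocks and then recenter those blocks, verifying in each case that the reduced word is alternating and centered so that $*$-freeness forces the expectation to vanish. The subtle point is that a centered product of $x$-algebra elements need not have vanishing $E$: in the surviving $(\bub P,q,\bub R)$ term the inner expression $\bub P\,s\,\bub R$ lies in the $x$-algebra but $E(\bub P\,s\,\bub R)\neq0$ in general, which is precisely what produces the nontrivial first term of the formula.
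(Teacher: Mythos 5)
Your proof is correct and follows essentially the same route as the paper: reduce to $E(ux^{2n}u^*x^{2m}ux^{2k}u^*)$, split each factor into its centered part plus its expectation, absorb the $B$-valued pieces into the adjacent unitary blocks, and kill every alternating centered word by $*$-freeness. The only difference is organizational — you expand all three $x$-powers at once into eight terms where the paper centers sequentially — and your identification of the two surviving terms and the final assembly via $E(\bub{P}\,s\,\bub{R})=E(PsR)-E(P)\,s\,E(R)$ match the paper's computation exactly.
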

\begin{proof}
We have $yy^*=ux^2u^*$ so that
\[
    E((yy^*)^n)
   =  E(ux^{2n}u^*)
   =  E(u E(x^{2n})u^*)
   =  E(u E((y^*y)^n)u^*),
\]
where the second equality is due to freeness.

We use the notation, for a $B$-valued random variable $y$, 
\[ \bub{y} = y^\circ \coloneqq y-E(y), \]
so that $y=\bub{y}+E(y)$ and $E(\bub{y})=0$.
Now
\begin{align}
\label{eqn:2.1}
    E(&(yy^*)^n(y^*y)^m(yy^*)^k) \nonumber \\
   &=  E(ux^{2n}u^*x^{2m}ux^{2k}u^*) \nonumber \\
   &=  E(ux^{2n}u^*(x^{2m})^\circ ux^{2k}u^*)
    +  E(ux^{2n}u^* E(x^{2m})ux^{2k}u^*).
\end{align}
We consider each of the terms in the last expression separately.
The first one is zero because
\begin{align*}
    E(u&x^{2n}u^*(x^{2m})^\circ ux^{2k}u^*) \\
   &=  E(u (x^{2n})^\circ u^*(x^{2m})^\circ u (x^{2k})^\circ u^*)
    +  E(u (x^{2n})^\circ u^*(x^{2m})^\circ u  E(x^{2k}) u^*) \\
   &\quad +  E(u  E(x^{2n}) u^*(x^{2m})^\circ u (x^{2k})^\circ u^*)
          +  E(u  E(x^{2n}) u^*(x^{2m})^\circ u  E(x^{2k}) u^*),
\end{align*}
and $*$-freeness of $u$ and $x$ ensures that each of these terms vanishes.
We rewrite the second term on the right-hand side of~\eqref{eqn:2.1} to obtain
\begin{align}
\label{eqn:2.2}
   E(u&x^{2n}u^* E(x^{2m})ux^{2k}u^*) \nonumber \\
   &=  E(ux^{2n}[u^* E(x^{2m})u]^\circ x^{2k}u^*)
   +  E(ux^{2n} E[u^* E(x^{2m})u] x^{2k}u^*).
\end{align}
The first term from~\eqref{eqn:2.2} expands to
\begin{align*}
    &E(ux^{2n}[u^* E(x^{2m})u]^\circ x^{2k}u^*) \\
   &=  E(u (x^{2n})^\circ [u^* E(x^{2m})u]^\circ (x^{2k})^\circ u^*) 
    +  E(u (x^{2n})^\circ [u^* E(x^{2m})u]^\circ  E(x^{2k}) u^*) \\
   &\; +  E(u  E(x^{2n}) [u^* E(x^{2m})u]^\circ (x^{2k})^\circ u^*)
   +  E(u  E(x^{2n}) [u^* E(x^{2m})u]^\circ  E(x^{2k}) u^*),
\end{align*}
and $*$-freeness of $u$ and $x$ implies that only the fourth of these terms may be nonzero.
Hence the first term on the right-hand side of~\eqref{eqn:2.2} satisfies
\begin{align*}
    &E(ux^{2n}[u^* E(x^{2m})u]^\circ x^{2k}u^*)
   =  E(u  E(x^{2n}) [u^* E(x^{2m})u]^\circ  E(x^{2k}) u^*) \\
   &=  E(u  E(x^{2n}) u^* E(x^{2m})u  E(x^{2k}) u^*) 
   -E(u  E(x^{2n}) E(u^* E(x^{2m})u)  E(x^{2k}) u^*).
\end{align*}
The other term from~\eqref{eqn:2.2} can be rewritten as
\begin{align*}
    E(ux^{2n} &E[u^* E(x^{2m})u] x^{2k}u^*) \\
   &=  E(u [x^{2n} E[u^* E(x^{2m})u] x^{2k}]^\circ u^*)
    +  E(u  E[x^{2n} E[u^* E(x^{2m})u] x^{2k}] u^*),
\end{align*}
and freeness forces the first of these two terms to be zero.
This shows that the second term on the right-hand side of~\eqref{eqn:2.2}
satisfies
\begin{align*}
    E(ux^{2n} E[u^* E(x^{2m})u] x^{2k}u^*)
   =  E(u  E(x^{2n} E(u^* E(x^{2m})u) x^{2k}) u^*).
\end{align*}
Putting everything together, we get~\eqref{eq:Exes}.
\end{proof}

\begin{lem}
\label{lem:g1g2}
Let $B$ be a unital $*$-algebra and $(A,E)$ a $B$-valued $*$-non\-com\-mu\-ta\-tive probability space.
Assume that $y$ is a $B$-valued circular random variable in $(A,E)$,
with cumulants $\beta_{(1,2)}$
and $\beta_{(2,1)}$ associated to $(y_1,y_2)=(y,y^*)$.
Let
\begin{equation}
\label{eq:g1g2def}
g_1(n)=E((yy^*)^n),\qquad
g_2(n)=E((y^*y)^n).
\end{equation}
Then the maps $g_1$ and
$g_2$ can be computed recursively by $g_1(0)=g_2(0)=1_{\CC^2}$, and, for all $n\ge1$,
\begin{align}
   g_1(n)
   &= \sum_{i=1}^n\beta_{(1,2)}(g_2(i-1))g_1(n-i), \label{eq:g1n} \\
   g_2(n)
   &= \sum_{i=1}^n\beta_{(2,1)}(g_1(i-1))g_2(n-i). \label{eq:g2n}
\end{align}
Furthermore, for every $n,m,k\ge0$, we have
\begin{equation}
\label{eq:Enmk}
E((yy^*)^n(y^*y)^m(yy^*)^k)= E((yy^*)^n E((y^*y)^m)(yy^*)^k).
\end{equation}
\end{lem}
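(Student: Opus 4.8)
The plan is to establish the three assertions in turn, with the recursions coming directly from the moment-cumulant formula and the circular cumulant property, and the final identity~\eqref{eq:Enmk} coming as a clean specialization of \myref{Lemma}{lem:1}. First I would derive~\eqref{eq:g1n}. Since $y$ is $B$-valued circular, the only nonvanishing $*$-cumulants are $\beta_{(1,2)}$ and $\beta_{(2,1)}$ of order one. Expanding $E((yy^*)^n)=E(y y^* y y^*\cdots y y^*)$ via the $B$-valued moment-cumulant formula, I sum over noncrossing partitions $\pi\in\NC(2n)$; the only partitions that contribute are those whose blocks are all pairs, each pair linking a $y$ to a $y^*$ in the appropriate (alternating) order so that the surviving cumulant is $\beta_{(1,2)}$ or $\beta_{(2,1)}$. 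The key combinatorial move is to condition on the block containing the first entry $y=y_1$. That block pairs position $1$ with some position $2i$ (an occurrence of $y^*$), and noncrossingness splits the remaining positions into an inner interval of length $2(i-1)$ (which is itself a word $y^*y\cdots y^*y$ contributing $g_2(i-1)$, nested inside the cumulant) and an outer interval of length $2(n-i)$ contributing $g_1(n-i)$. Feeding the inner moment into the length-one cumulant $\beta_{(1,2)}$ via the nesting rule of $\hat\alpha_j$ yields exactly the summand $\beta_{(1,2)}(g_2(i-1))\,g_1(n-i)$, giving~\eqref{eq:g1n}. The derivation of~\eqref{eq:g2n} is identical with the roles of $y$ and $y^*$ interchanged, so $\beta_{(1,2)}$ becomes $\beta_{(2,1)}$ and $g_1,g_2$ swap.

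For the final identity~\eqref{eq:Enmk}, I would invoke \myref{Lemma}{lem:1}, but I cannot do so directly, because that lemma is stated for a variable of the form $y=ux$ with $u$ a unitary, $E(u)=0$, $x$ self-adjoint, and $u,x$ $*$-free. The clean route is to realize the given circular $y$ in this bipolar form. By \myref{Theorem}{thm:general-even-decomp} (or by a direct amalgamated-free-product construction producing a Haar unitary $u$ commuting with $B$, $*$-free from a self-adjoint $x$, with $E(u)=0$), I may pass to an enlargement in which $y$ has the same $B$-valued $*$-distribution as $ux$ with these hypotheses satisfied. Since~\eqref{eq:Enmk} is an identity among $B$-valued $*$-moments of $y$, it is invariant under passing to an enlargement with the same $*$-distribution, so it suffices to prove it for $ux$. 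Now apply~\eqref{eq:Exes}: the three terms on its right-hand side must be shown to collapse to $E((yy^*)^n E((y^*y)^m)(yy^*)^k)$. Using the first displayed identity of \myref{Lemma}{lem:1}, $E((y^*y)^m)=E(x^{2m})$ up to the automorphism bookkeeping, and $E(u^*E(x^{2m})u)=\theta(E(x^{2m}))$-type simplifications via $*$-freeness and $E(u)=0$, the three terms telescope.

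The step I expect to be the main obstacle is verifying that the three terms of~\eqref{eq:Exes} genuinely collapse to the single product on the right of~\eqref{eq:Enmk}. The subtlety is that $E(u^*E(x^{2m})u)$ need not equal $E(x^{2m})$ when $u$ merely satisfies $E(u)=0$ and is $*$-free from $x$: what $*$-freeness gives is $E(u^* b u)=E(u^* u)\,?$-type reductions that are not automatic for noncentral $b$. The safe approach is to track each term symbolically, replacing every inner expectation $E(x^{2j})$ by the scalar-coefficient-valued element it is and repeatedly applying the defining vanishing condition $E(a_1\cdots a_n)=0$ for centered alternating words in the $*$-algebras generated by $u$ and by $x$. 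I would organize this by writing each of $x^{2n},x^{2m},x^{2k}$ as $(x^{2\bullet})^\circ+E(x^{2\bullet})$ and discarding every term containing a centered $x$-factor adjacent to a $u$ or $u^*$, exactly as in the proof of \myref{Lemma}{lem:1}; what remains must be matched against the expansion of $E((yy^*)^n E((y^*y)^m)(yy^*)^k)$ performed the same way. Once the surviving terms on both sides are identified, the identity~\eqref{eq:Enmk} follows. I do not anticipate difficulty with~\eqref{eq:g1n} and~\eqref{eq:g2n}; those are routine once the noncrossing pairings are organized by the block through the first vertex.
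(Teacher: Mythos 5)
Your derivation of the recursions \eqref{eq:g1n} and \eqref{eq:g2n} is correct and is exactly the paper's argument (condition on the block containing the leftmost letter; parity and noncrossingness force the inner and outer factors). The gap is in your treatment of \eqref{eq:Enmk}. The lemma is stated for an arbitrary $B$-valued circular $y$ over an arbitrary unital $*$-algebra $B$; no free bipolar decomposition $(u,x)$ with $u$ unitary is assumed, and in general none exists --- that is the central point of the paper (Example~6.8 of \cite{BD-paper} gives a tracial $\CC^2$-valued circular element with no free bipolar decomposition, and \myref{Theorem}{thm:general-even-decomp} only produces one under the extra cumulant condition \eqref{eqn:R-auto}). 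The alternative you mention, a Haar unitary $u$ commuting with $B$ and $*$-free from $y$ as in \myref{Theorem}{thm:R-diagonal}(c), gives $y\sim uy$, but $y$ is not self-adjoint, so this is not of the form $ux$ required by \myref{Lemma}{lem:1}. So the reduction ``realize $y$ as $ux$'' is not available, and the whole route through \eqref{eq:Exes} cannot get off the ground in the stated generality. Moreover, even where such a decomposition exists, the three terms of \eqref{eq:Exes} do \emph{not} collapse to $E((yy^*)^nE((y^*y)^m)(yy^*)^k)$: the paper's \myref{Lemma}{lem:m0} uses \eqref{eq:Enmk} \emph{together with} \eqref{eq:Exes} to isolate the genuinely new quantity $M_0(n,m,k)=E(ug_2(n)u^*g_2(m)ug_2(k)u^*)$ as the difference of these expressions, which would be vacuous if they telescoped. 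You flag this collapse as the ``main obstacle'' but do not resolve it, and it cannot be resolved, because the claimed identity between the two sides of \eqref{eq:Exes} after collapse is false in general.

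The intended proof of \eqref{eq:Enmk} is much more elementary and stays entirely within the moment-cumulant formalism you already used for the recursions. Write out $(yy^*)^n(y^*y)^m(yy^*)^k$ as a word of length $2(n+m+k)$ and expand by the moment-cumulant formula; circularity means only noncrossing pair partitions matching a $y$ with a $y^*$ contribute. A parity count shows that the leftmost letter of the middle block (a $y^*$ at position $2n+1$) cannot pair with any letter outside the middle block: pairing it with a $y$ in the first block leaves an odd number of letters trapped between them, and likewise for a $y$ in the third block. Inductively the entire middle block of $2m$ letters is paired within itself, and the nesting rule for $\hat\alpha_j$ then replaces that block by the scalar $E((y^*y)^m)\in B$ inserted into the outer word, which is precisely \eqref{eq:Enmk}. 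I recommend you replace your Lemma~\ref{lem:1}-based argument with this combinatorial one.
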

\begin{proof}
Clearly $g_1(0)=g_2(0)=1$.
The recursive relations are easy consequences of the moment-cumulant formula.
Since $y$ is circular, the only noncrossing partitions that need to be
considered are noncrossing pair partitions for which each set in the partition
pairs an $y$ with an $y^*$.
By considering the $n$ possibilities for the position of the $y^*$ in the word $(yy^*)^n$ that is
paired with the left-most $y$ in a given noncrossing pair partition, we obtain
\begin{align*}
   g_1(n)
   &=  E((yy^*)^n)
   = \sum_{i=1}^n\beta_{(1,2)}( E((y^*y)^{i-1})) E((yy^*)^{n-i}) \\
   &= \sum_{i=1}^n\beta_{(1,2)}(g_2(i-1))g_1(n-i).
\end{align*}
Similarly, by considering the position of the $y$ that is paired with the left-most $y^*$, we get
\begin{align*}
   g_2(n)
   &=  E((y^*y)^n)
   = \sum_{i=1}^n\beta_{(2,1)}( E((yy^*)^{i-1})) E((y^*y)^{n-i}) \\
   &= \sum_{i=1}^n\beta_{(2,1)}(g_1(i-1))g_2(n-i).
\end{align*}
Finally, the equality~\eqref{eq:Enmk} can be proved using the moment-cumulant formula.
Indeed, any noncrossing pairing of the $2(n+m+k)$ gotten from writing out $(yy^*)^n(y^*y)^m(yy^*)^k$
that pairs only instances of $y$ with instances of $y^*$, must pair all those from the middle block of $2m$ terms 
with others from that same block.
\end{proof}

\begin{lem}
\label{lem:m0}
Let $B$ be a unital $*$-algebra and let $(A,E)$ be a $B$-valued $*$-noncommutative probability space.
Let $y$ be a $B$-valued circular random variable in $(A,E)$ that can be written $y=ux$ for $u,x\in A$ as in \myref{Lemma}{lem:1}
and let $g_1$ and $g_2$ be as defined in~\eqref{eq:g1g2def} of \myref{Lemma}{lem:g1g2}.
Consider the linear map $M:B^{\otimes 3}\to B$ defined by
\[ M(b_1\otimes b_2\otimes b_3) =  E(ub_1u^*b_2ub_3u^*) \]
and the map $M_0:\NN_0^3\to B$ given by
\begin{equation}
\label{eqn:m0m}
M_0(n,m,k) =  M(g_2(n)\otimes g_2(m)\otimes g_2(k)).
\end{equation}
Then
\begin{equation}
\begin{aligned}
\label{eqn:m0}
   M_0(n,m,k)
   &= G(n, g_2(m), k)
   -N_1(H(n, N_2(g_2(m)), k)) \\
   &\quad+N_1(g_2(n)N_2(g_2(m))g_2(k)),
\end{aligned}
\end{equation}
where
\begin{align*}
   H  &:\NN_0\times B\times\NN_0\to B;
         &&(n,b,k) \mapsto  E((y^*y)^nb(y^*y)^k) \\
   G  &:\NN_0\times B\times\NN_0\to B;
         &&(n,b,k) \mapsto  E((yy^*)^nb(yy^*)^k) \\
   N_1&:B\to B; &&b \mapsto  E(ubu^*) \\
   N_2&:B\to B; &&b \mapsto  E(u^*bu).
\end{align*}
\end{lem}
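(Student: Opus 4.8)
The plan is to read off \eqref{eqn:m0} directly from the three-term identity \eqref{eq:Exes} of \myref{Lemma}{lem:1}, after translating everything into the $y$-notation via one elementary observation. Since $u$ is unitary and $x=x^*$, we have $y^*y=xu^*ux=x^2$, so that $(y^*y)^n=x^{2n}$ and hence $E(x^{2n})=g_2(n)$ for every $n\ge0$. This identity is the bridge between the right-hand side of \eqref{eq:Exes}, which is written in terms of $x$, and the maps $g_2,G,H,N_1,N_2$ appearing in \eqref{eqn:m0}, which are written in terms of $y$ and $u$.

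First I would identify the third summand on the right of \eqref{eq:Exes}. Using $E(x^{2n})=g_2(n)$ and likewise for $m$ and $k$, it becomes $E(u\,g_2(n)\,u^*\,g_2(m)\,u\,g_2(k)\,u^*)=M(g_2(n)\otimes g_2(m)\otimes g_2(k))=M_0(n,m,k)$, which is exactly the quantity we wish to compute; see \eqref{eqn:m0m}. Next I would rewrite the left-hand side of \eqref{eq:Exes}. By \eqref{eq:Enmk} of \myref{Lemma}{lem:g1g2},
\[
E((yy^*)^n(y^*y)^m(yy^*)^k)=E((yy^*)^n\,g_2(m)\,(yy^*)^k)=G(n,g_2(m),k).
\]
Substituting these two identifications into \eqref{eq:Exes} and solving for $M_0(n,m,k)$ moves $G(n,g_2(m),k)$ to the front and flips the signs of the two remaining summands, producing precisely the sign pattern of \eqref{eqn:m0}, provided those two summands match the claimed terms.

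The final step is to match them. For the first, the inner expectation $E(u^*E(x^{2m})u)=E(u^*g_2(m)u)=N_2(g_2(m))$ is an element of $B$, so $E(x^{2n}N_2(g_2(m))x^{2k})=E((y^*y)^nN_2(g_2(m))(y^*y)^k)=H(n,N_2(g_2(m)),k)$; applying the outer $E(u\,\cdot\,u^*)=N_1(\cdot)$ shows this summand equals $N_1(H(n,N_2(g_2(m)),k))$. For the second, the same substitutions turn $E(u\,E(x^{2n})\,E(u^*E(x^{2m})u)\,E(x^{2k})\,u^*)$ into $N_1(g_2(n)N_2(g_2(m))g_2(k))$. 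Assembling these yields \eqref{eqn:m0}.

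There is no serious obstacle: the whole argument is a substitution combining \eqref{eq:Exes} and \eqref{eq:Enmk}. The only point requiring care is the bookkeeping, namely keeping track of which conditional expectations are nested, making sure each factor $E(x^{2\bullet})$ is correctly replaced by the corresponding $g_2(\bullet)$, and confirming that $N_1,N_2,H,G$ are applied to the right arguments. The key conceptual input is the identity $E(x^{2n})=g_2(n)$, which lets us re-express the $x$-side formula of \myref{Lemma}{lem:1} entirely in terms of the circular variable $y$.
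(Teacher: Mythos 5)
Your proposal is correct and follows essentially the same route as the paper: both identify $M_0(n,m,k)$ with the third summand of \eqref{eq:Exes}, solve for it, rewrite the left-hand side via \eqref{eq:Enmk} as $G(n,g_2(m),k)$, and translate the remaining two summands into $N_1$, $N_2$, $H$ using $E(x^{2n})=g_2(n)$. No gaps.
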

\begin{proof}
The relations proved in \myref{Lemma}{lem:1} become
\begin{equation}
\label{eqn:lem1,1}
   g_1(n) =  E(ug_2(n)u^*).
\end{equation}
and
\begin{align*}
    E((yy^*)^n(y^*y)^m(yy^*)^k) 
   &=  E(u E(x^{2n} E(u^*g_2(m)u)x^{2k})u^*) \\ 
   &\quad-  E(ug_2(n) E(u^*g_2(m)u)g_2(k)u^*) \\
   &\quad+  E(ug_2(n)u^*g_2(m)ug_2(k)u^*).
\end{align*}
This yields
\begin{align*}
   M_0(n,m,k) 
   &=  E(ug_2(n)u^*g_2(m)ug_2(k)u^*) \\
   &=  E((yy^*)^n(y^*y)^m(yy^*)^k) - E(u E(x^{2n} E(u^*g_2(m)u)x^{2k})u^*) \\
      &\quad + E(ug_2(n) E(u^*g_2(m)u)g_2(k)u^*) \\
   &=  E((yy^*)^n E((y^*y)^m)(yy^*)^k) -N_1(H(n, N_2(g_2(m)), k)) \\
      &\quad +N_1(g_2(n)N_2(g_2(m))g_2(k)) \\
   &= G(n, g_2(m), k)
      -N_1(H(n, N_2(g_2(m)), k))  \\
      &\quad+N_1(g_2(n)N_2(g_2(m))g_2(k)),
\end{align*}
where in the third equality we used~\eqref{eq:Enmk} from \myref{Lemma}{lem:g1g2}
to rewrite the term $E((yy^*)^n(y^*y)^m(yy^*)^k)$.
\end{proof}

\begin{lem}
\label{lem:GH}
In the setting of \myref{Lemma}{lem:m0},
let $\beta_{(1,2)}$ and $\beta_{(2,1)}$ be the cumulant maps associated to $(y_1,y_2)=(y,y^*)$.
In addition to the maps $G$ and $H$, consider also the maps
\begin{align*}
G'&:\NN_0\times B\times\NN_0\to B;
  &&(n,b,k) \mapsto E((y^*y)^ny^*by(y^*y)^k) \\
H'&:\NN_0\times B\times\NN_0\to B;
  &&(n,b,k) \mapsto E((yy^*)^nyby^*(yy^*)^k).
\end{align*}
Then $G$ and $G'$ can be computed recursively by
\begin{alignat*}{2}
   G(0,b,k) &= bg_1(k), \\
   G'(0,b,k) &= \sum_{j=0}^k\beta_{(2,1)}(bg_1(j))g_2(k-j), \displaybreak[1] \\
   G(n,b,k) &= \sum_{i=1}^n\beta_{(1,2)}(g_2(i-1))G(n-i,b,k) \\
            &\quad+ \sum_{j=1}^k\beta_{(1,2)}(G'(n-1,b,j-1))g_1(k-j),  \qquad&(n\ge1) \displaybreak[1] \\ 
   G'(n,b,k) &= \sum_{i=1}^n\beta_{(2,1)}(g_1(i-1)) G'(n-i,b,k) \\
            &\quad+ \sum_{j=0}^k\beta_{(2,1)}(G(n,b,j))g_2(k-j), &(n\ge1).
\end{alignat*}
Moreover, $H$ and $H'$ can be computed recursively by
\begin{alignat*}{2}
   H(0,b,k) &= bg_2(k), \\
   H'(0,b,k) &= \sum_{j=0}^k\beta_{(1,2)}(bg_2(j))g_1(k-j), \displaybreak[1] \\
   H(n,b,k) &= \sum_{i=1}^n\beta_{(2,1)}(g_1(i-1))H(n-i,b,k) \\
            &\quad+ \sum_{j=1}^k\beta_{(2,1)}(H'(n-1,b,j-1))g_2(k-j), \qquad&(n\ge1) \displaybreak[1] \\ 
   H'(n,b,k) &= \sum_{i=1}^n\beta_{(1,2)}(g_2(i-1)) H'(n-i,b,k) \\
            &\quad+ \sum_{j=0}^k\beta_{(1,2)}(H(n,b,j))g_1(k-j),  &(n\ge1).
\end{alignat*}
\end{lem}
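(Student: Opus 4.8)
The plan is to derive all four recursions in a uniform way by applying the moment-cumulant formula to $B$-valued circular words and tracking which noncrossing pairings can occur. Since $y$ is circular, the only surviving noncrossing partitions pair each instance of $y$ with an instance of $y^*$, and a pairing of the letter $y$ with the letter $y^*$ is admissible only when these are of opposite type (one unstarred, one starred) and the pairing is noncrossing. The key structural observation is that in a word like $(yy^*)^n b (yy^*)^k$ the insertion $b$ sits in the middle, so any noncrossing pairing either pairs a letter to the left of $b$ with another letter to its left, or ``straddles'' $b$, pairing a letter on the left with one on the right; it is this straddling that couples the two blocks and forces the auxiliary maps $G'$ and $H'$ (which carry a $y^* b y$ or $y b y^*$ in the middle) into the recursion. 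The $b$-factor is simply passed through multilinearly by the cumulant maps, which is why $b$ appears inside the cumulant arguments in the recursive formulas.

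First I would establish the base cases. For $G(0,b,k)=E(b(yy^*)^k)=b\,E((yy^*)^k)=b\,g_1(k)$, pulling $b\in B$ out of $E$ by the bimodule property. For $G'(0,b,k)=E(y^*by(y^*y)^k)$, I would isolate the leftmost letter $y^*$: in any admissible pairing it must pair with one of the $y$'s to its right, and summing over the position $j$ of that partner splits the word into a nested circular moment contributing the cumulant $\beta_{(2,1)}$ applied to the enclosed block $b\,g_1(j)$, times the remaining $g_2(k-j)$. The bases for $H,H'$ are obtained by interchanging the roles of $y$ and $y^*$ (equivalently, of the two cumulant maps $\beta_{(1,2)}\leftrightarrow\beta_{(2,1)}$ and $g_1\leftrightarrow g_2$).

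Next I would prove the $n\ge1$ recursions. In $G(n,b,k)=E((yy^*)^n b (yy^*)^k)$, focus on the leftmost letter, a $y$; its partner $y^*$ is either within the left block $(yy^*)^n$ or straddles $b$ into the right block. If the partner is the $i$th $y^*$ on the left, the enclosed letters form a circular moment $E((y^*y)^{i-1})=g_2(i-1)$, producing the term $\beta_{(1,2)}(g_2(i-1))\,G(n-i,b,k)$; the enclosed portion reduces to a shorter $G$ because $b$ remains to the right. If instead the partner straddles, landing on a $y^*$ in the right block at position $j$, then the letters strictly enclosed by this pair form exactly a word of type $y^*\cdots y$ wrapped around $b$, i.e. a $G'(n-1,b,j-1)$ contribution, times the trailing $g_1(k-j)$. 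Summing both possibilities gives the stated formula for $G$, and the companion recursion for $G'$ follows by the same leftmost-letter analysis starting from a $y^*$. The $H,H'$ recursions are again obtained by the $y\leftrightarrow y^*$ symmetry.

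The main obstacle I expect is the careful bookkeeping of the straddling pairings: one must verify that after removing the outer straddling pair the enclosed word really is an instance of the primed map with the indices shifted exactly as written (the $n-1$, the $j-1$, and the range of the summation index $j$), and that the noncrossing condition forbids any partner of the outer pair's enclosed letters from escaping the enclosed region. This is where an off-by-one error or a misidentification of which cumulant map ($\beta_{(1,2)}$ versus $\beta_{(2,1)}$) applies is most likely to creep in, so I would treat the straddling case with explicit small-index sanity checks before asserting the general pattern.
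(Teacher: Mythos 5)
Your plan is correct and follows essentially the same route as the paper's proof: both apply the $B$-valued moment--cumulant formula and organize the noncrossing pairings by the partner of the leftmost letter, with pairings internal to the left block reproducing the same map at lower index and pairings that straddle $b$ producing the primed maps $G'$, $H'$ with the stated index shifts. The only cosmetic difference is that the paper writes out $H$ and $H'$ explicitly and declares $G$, $G'$ analogous, whereas you do the reverse and invoke the $y\leftrightarrow y^*$ (hence $\beta_{(1,2)}\leftrightarrow\beta_{(2,1)}$, $g_1\leftrightarrow g_2$) symmetry.
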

\begin{proof}
The moment-cumulant formula yields
\begin{align*}
   H(n,b,k)
   &=  E((y^*y)^nb(y^*y)^k) \\
   &= \sum_{i=1}^n\beta_{(2,1)}(g_1(i-1)) E((y^*y)^{n-i}b(y^*y)^k) \\
   &\quad+ \sum_{j=1}^k\beta_{(2,1)}( E((yy^*)^{n-1}yby^*(yy^*)^{j-1}))g_2(k-j) \displaybreak[1] \\
   &= \sum_{i=1}^n\beta_{(2,1)}(g_1(i-1))H(n-i,b,k) \\
    &\quad+ \sum_{j=1}^k\beta_{(2,1)}(H'(n-1,b,j-1))g_2(k-j),
\end{align*}
and another application of the moment-cumulant formula gives
\begin{align*}
   H'(&n,b,k)
   =  E((yy^*)^nyby^*(yy^*)^k) \\
   &= \sum_{i=1}^n\beta_{(1,2)}(g_2(i-1))  E((yy^*)^{n-i}yby^*(yy^*)^k) \\
    &\quad+ \beta_{(1,2)}( E((y^*y)^nb))g_1(k) 
   +\sum_{j=1}^k\beta_{(1,2)}( E((y^*y)^nb(y^*y)^j))g_1(k-j) \\
   &= \sum_{i=1}^n\beta_{(1,2)}(g_2(i-1))  E((yy^*)^{n-i}yby^*(yy^*)^k) \\
   &\quad+ \sum_{j=0}^k\beta_{(1,2)}( E((y^*y)^nb(y^*y)^j))g_1(k-j) \\
   &= \sum_{i=1}^n\beta_{(1,2)}(g_2(i-1)) H'(n-i,b,k)
   + \sum_{j=0}^k\beta_{(1,2)}(H(n,b,j))g_1(k-j).
\end{align*}
Together, these yield the recursive formulas for $H$ and $H'$.

Similarly, two applications of the moment-cumulant formula yield
the recursive formulas for $G$ and $G'$.
\end{proof}

\begin{proof}[Proof of \myref{Theorem}{thm:main}]
Let $(A,E)$ be a $\CC^2$-valued $*$-noncommutative probability space, and fix
a $\CC^2$-valued circular element $a\in A$ with associated cumulant maps $\alpha_{(1,2)}$ and $\alpha_{(2,1)}$ for the pair $(a,a^*)$.
We assume that there is a faithful trace $\tau$ on $\CC^2$, given by $\tau(x,y)=qx+(1-q)y$, where $q\in(0,1)$, the corresponding
functional $\tau\circ E$ is a trace on $A$.
We assume, moreover, that the circular element $a$ has
a free bipolar decomposition $(u,x)$, with $u$ unitary.
By \myref{Lemma}{lem:takeR-diag} and \myref{Theorem}{thm:unitaries},
we may assume that the unitary $u$ is a Haar unitary.

We will denote elements of $\CC^2$ as ordered pairs of complex numbers.
Then $\alpha_{(1,2)}$ and $\alpha_{(2,1)}$ are given by
\begin{equation}
\label{eq:alph1221}
\begin{aligned}
   \alpha_{(1,2)}(x,y)
   &= (r_{11}x + r_{12}y, r_{21}x + r_{22}y) \\
   \alpha_{(2,1)}(x,y)
   &= (s_{11}x + s_{12}y, s_{21}x + s_{22}y),
\end{aligned}
\end{equation}
for some parameters $r_{ij}$, $s_{ij}$.
By positivity of $\alpha_{(1,2)}$ and $\alpha_{(2,1)}$ we have $r_{ij}\ge0$ and $s_{ij}\ge0$ for each $i,j\in\{1,2\}$,
and each such choice of $r_{ij}$ and $s_{ij}$ yields completely positive cumulant maps $\alpha_{(1,2)}$ and $\alpha_{(2,1)}$
given by~\eqref{eq:alph1221}.

Since $\tau\circ E$ is a trace, for all
$b_1=(x_1,y_1),b_2=(x_2,y_2)\in \CC^2$, we have
\[ 
   \tau(\alpha_{(1,2)}(b_1)b_2) = \tau(b_1\alpha_{(2,1)}(b_2)),
\]
which is equivalent to
\begin{align*}
&qr_{11}x_1x_2+qr_{12}y_1x_2+(1-q)r_{21}x_1y_2+(1-q)r_{22}y_1y_2 \\
   &= qs_{11}x_1x_2+qs_{12}x_1y_2+(1-q)s_{21}y_1x_2+(1-q)s_{22}y_1y_2.
 \end{align*}
   This holds for every $b_1,b_2\in\CC^2$ if and only if
\begin{equation}
\label{eq:selim}
r_{11}=s_{11},\quad r_{22}=s_{22}, \quad qr_{12}=(1-q)s_{21},\quad (1-q)r_{21}=qs_{12}.
\end{equation}
Using~\eqref{eq:selim} to eliminate the $s_{ij}$, the cumulants are given by
\begin{equation}
\label{eqn:cumulants}
\begin{aligned}
   \alpha_{(1,2)}(x,y)
   &= (r_{11}x + r_{12}y, r_{21}x + r_{22}y) \\
   \alpha_{(2,1)}(x,y)
   &= (r_{11}x + \dfrac{1-q}{q}r_{21}y, \dfrac{q}{1-q}r_{12}x + r_{22}y).
\end{aligned}
\end{equation}

There are only two automorphisms on $\CC^2$, so it's easy using~\eqref{eqn:cumulants} to determine what
the automorphism condition~\eqref{eqn:original-auto} means in terms of the
parameters $q,r_{11},r_{12},r_{21},\allowbreak r_{22}$.
If $\theta$ is the identity automorphism, then~\eqref{eqn:original-auto} is equivalent to
$qr_{12}=(1-q)r_{21}$.
If $\theta$ is the flip automorphism defined by
$\theta((x,y))=(y,x)$, then~\eqref{eqn:original-auto} is equivalent to
\begin{align*}
   (r_{11}x+\dfrac{1-q}{q}r_{21}y,\dfrac{q}{1-q}r_{12}x+r_{22}y)
   = (r_{21}y+r_{22}x,r_{11}y+r_{12}x),
\end{align*}
for all $x,y\in\CC$,
which holds if and only if $r_{11}=r_{22}$ and $q=1/2$.
Hence there exists some automorphism $\theta$ on $\CC^2$ satisfying~\eqref{eqn:original-auto}
if and only if the parameters satisfy
\begin{equation}
\label{eqn:new-auto}
   qr_{12} = (1-q)r_{21}
   \quad\OR\quad (r_{11} = r_{22} \quad\AND\quad q = 1/2).
\end{equation}

Our strategy to prove \myref{Theorem}{thm:main} is to 
suppose that~\eqref{eqn:new-auto} fails to hold,
namely, to assume
\begin{align}
\label{eqn:auto}
   qr_{12} \ne (1-q)r_{21}
   \quad\AND\quad ( r_{11}\ne r_{22} \quad\OR\quad q\ne 1/2 ),
\end{align}
and to obtain a contradiction.

We will be applying \myref{Lemmas}{lem:1}, \ref{lem:g1g2}, \ref{lem:m0} and~\ref{lem:GH} in the case $y=a$ and $\beta_{(i,j)}=\alpha_{(i,j)}$.
Thus, with the notation used in \myref{Lemma}{lem:g1g2}, we have the functions $g_1,g_2:\NN_0\to \CC^2$ given by
\begin{align*}
   g_1(n) &=  E((aa^*)^n)=E(ux^{2n}u^*) \\
   g_2(n) &=  E((a^*a)^n)=E(x^{2n}).
\end{align*}

We now use~\eqref{eqn:lem1,1} to show that
$1_{\CC^2}=(1,1)=g_2(0)\in\CC^2$ and $g_2(1)= E(a^*a)$ are linearly independent.
Indeed, if $\alpha_{(2,1)}(1_{\CC^2})=g_2(1)=(c,c)$ for some scalar $c$, then we
have by~\eqref{eqn:lem1,1} that
\begin{multline*}
\alpha_{(1,2)}(1_{\CC^2}) = E(aa^*)
   =g_1(1)
   = E(ug_2(1)u^*) \\
   =  E(u(c,c)u^*)=(c,c)=\alpha_{(2,1)}(1_{\CC^2}).
\end{multline*}
Comparing the first components of $\alpha_{(1,2)}(1_{\CC^2})$ and $\alpha_{(2,1)}(1_{\CC^2})$
from~\eqref{eqn:cumulants} yields the equation
$r_{11}+r_{12}=r_{11}+\frac{1-q}{q} r_{21}$, and thus
$qr_{12}=(1-q)r_{21}$.
This contradicts~\eqref{eqn:auto}.
Therefore $g_2(0)=1_{\CC^2}$ and $g_2(1)$ are linearly independent, so we may
write every $b\in\CC^2$ as a linear combination of $g_2(0)$ and $g_2(1)$.

Since it will be important to work with these linear combinations, we 
describe the coordinate functionals $P_1,P_2:\CC^2\to\CC$ for the basis
$\{g_2(0),g_2(1)\}$ of $\CC^2$; namely, the linear functionals $P_1,P_2$
satisfying
\begin{equation}
\label{eq:P1P2coefs}
b=P_1(b) g_2(0)+ P_2(b) g_2(1), \quad b\in\CC^2.
\end{equation}
Taking $b=(x,y)$ and writing $g_2(1)=(g_2(1)_1,g_2(1)_2)$, we solve the
linear system
\begin{align*}
   x &= P_1((x,y))+P_2((x,y))g_2(1)_1 \\
   y &= P_1((x,y))+P_2((x,y))g_2(1)_2
\end{align*}
to obtain
\begin{equation}
\label{eq:P1P2}
\begin{aligned}
P_1((x,y)) &= x-P_2((x,y))g_2(1)_1, \\
P_2((x,y)) &= (x-y)/(g_2(1)_1-g_2(1)_2).
\end{aligned}
\end{equation}

From~\eqref{eqn:lem1,1}, if
$0=\sum_{k=0}^nc_kg_2(k)$ for some scalars $c_k$, then
\begin{align*}
   \sum_{k=0}^nc_kg_1(k)
   = \sum_{k=0}^nc_k E(ug_2(k)u^*)
   =  E\left(u\sum_{k=0}^kc_kg_2(k)u^*\right)
   =  E(u0u^*)
   = 0.
\end{align*}
That is, any vanishing linear combination of $g_2(k)$'s yields the same
vanishing linear combination of $g_1(k)$'s.
In particular, from
\[
g_2(n)=P_1(g_2(n)) g_2(0) + P_2(g_2(n)) g_2(1)
\]
we get
\begin{equation}
\label{eqn:lin}
   g_1(n)=P_1(g_2(n)) g_1(0) + P_2(g_2(n)) g_1(1),
   \qquad \text{for all}\ n\in\NN_0.
\end{equation}

Equations~\eqref{eq:g1n} and~\eqref{eq:g2n} from \myref{Lemma}{lem:g1g2} (in the case $y=a$ and $\beta_{(i,j)}=\alpha_{(i,j)}$)
and also the formulas~\eqref{eqn:cumulants} for the cumulants $\alpha_{(i,j)}$ allow us to express, for each $n$,
the components of $g_1(n)$ and $g_2(n)$ as rational functions
in the parameters $q,r_{11},r_{12},r_{21},r_{22}$.
We then use \eqref{eqn:lin} in the cases $n=2$ and $n=3$ to obtain some rather long algebraic equations involving these parameters.
From here, we apply Mathematica's \cite{Mat} Reduce function to these equations.
Incoporating also the assumptions from~\eqref{eqn:auto}, we conclude that the parameters satisfy one of the following
conditions:
\begin{enumerate}[leftmargin=2cm,label=Case \Roman*:,ref=\Roman*]
\item $r_{11}=r_{21}$ and $r_{12} \ne r_{22}$ and $q=r_{11}/(r_{11}+r_{22})$
      \label{case:1}
\item $r_{11}\ne r_{21}$ and $r_{12} = r_{22}$ and $q=r_{11}/(r_{11}+r_{22})$
      \label{case:2}
\item $r_{11} = r_{21}$ and $r_{12} = r_{22}$.  \label{case:3}
\end{enumerate}
The details of the calculations that were carried out in Mathematica
can be found in the Mathematica Notebook~\cite{DG}.

We will use the function $M$ and, more specifically, $M_0$, defined in \myrefsplit{Lemma}{lem:m0} (with $y=a$ and $\beta_{(i,j)}=\alpha_{(i,j)}$).
Using~\eqref{eqn:m0}, $M_0$ is defined in terms of functions $G$, $H$, $N_1$ and $N_2$, as well as the quantities $g_1$ and $g_2$.
As we have already seen, $g_1$ and $g_2$ are expressed in terms of the parameters $r_{ij}$ and $q$ by \myref{Lemma}{lem:g1g2}, while using the recursive formulas given in \myref{Lemma}{lem:GH},
$G$ and $H$ can also be expressed in terms of these parameters.
Now to focus in $N_1$,
using~\eqref{eq:P1P2coefs} and~\eqref{eqn:lem1,1}, we get
\begin{align*}
   N_1(b)
   = P_1(b) E(ug_2(0)u^*)+P_2(b) E(ug_2(1)u^*)
   = P_1(b)g_1(0) + P_2(b)g_1(1).
\end{align*}
Thus, with help of~\eqref{eq:P1P2}, the map $N_1$ can be expressed in terms of the parameters $r_{ij}$ and $q$.

Regarding $N_2$, we use the $*$-freeness of $u$ and $x$ over $\CC^2$ to
obtain, for all $b\in\CC^2$,
\begin{align}
   \label{eqn:m1m3}
   \alpha_{(1,2)}(N_2(b)) 
   &=  E(a E(u^*bu)a^*) \nonumber
   =  E(ux E(u^*bu)xu^*) \\ 
   &=  E(u E(x E(u^*bu)x)u^*) 
   =  E(u E(xu^*bux)u^*) \\ \nonumber
   &=  E(u E(a^*ba)u^*) 
   = N_1(\alpha_{(2,1)}(b)), 
\end{align}
which yields the relation $\alpha_{(1,2)}\circ N_2 = N_1\circ \alpha_{(2,1)}$.
In \myref{Cases}{case:1} and~\ref{case:2}, since $0<q<1$, both $r_{11}$ and $r_{22}$
must be nonzero.
Hence, $r_{11}r_{22}\ne r_{12}r_{21}$ and, consequently, the cumulant map $\alpha_{(1,2)}$
is invertible.
Thus, \eqref{eqn:m1m3} implies
\[
N_2=\alpha_{(1,2)}^{-1}\circ N_1 \circ \alpha_{(2,1)}.
\]
Consequently, $N_2$ and also $M_0$ can also be expressed in terms of the parameters $r_{ij}$ and $q$, which we write for $m,n,k\in\NN_0$, as
\begin{equation}
\label{eq:M0params}
M_0(m,n,k)=F(m,n,k;r_{11},r_{r12},r_{21},r_{22},q).
\end{equation}

In \myref{Case}{case:3},  $\alpha_{(1,2)}$ is
not invertible and we cannot write $N_2$ in terms of $N_1$.
Instead, since $N_2$ is linear and positive, we write
\begin{align*}
   N_2((x,y)) &= (m_{11}x+m_{12}y,m_{21}x+m_{22}y)
\end{align*}
for parameters
$m_{11},m_{12},m_{21},m_{22}\ge 0$.
Since
\[ 1_{\CC^2}=N_2(1_{\CC^2})=(m_{11}+m_{12},m_{21}+m_{22}) \]
we get $m_{12}=1-m_{11}$ and $m_{21}=1-m_{22}$, which we incorporate into the description of $N_2$.
In \myref{Case}{case:3}, we express $M_0$ in terms of
$q,r_{11},r_{12},r_{21},r_{22}$ together with the new parameters
$m_{11},m_{22}$,
 which we write for $m,n,k\in\NN_0$, as
\begin{equation}
\label{eq:M0params3}
M_0(m,n,k)=F_3(m,n,k;r_{11},r_{r12},r_{21},r_{22},q,m_{11},m_{22}).
\end{equation}

In fact, since $g_2(0)$ and $g_2(1)$ form a basis for $\CC^2$, $M$ 
is determined by the values of $M_0$ on the eight triples in $\{0,1\}^3$.
More specifically, fixing $(n,m,k)\in\NN_0^3$,
taking $b_1=g_2(n)$, $b_2=g_2(m)$ and $b_3=g_2(k)$ and
writing $c_i:=P_1(b_i)$ and $d_i:=P_2(b_i)$ for $i=1,2,3$,
we have
\begin{align}
   &M_0(n,m,k)=M(b_1\otimes b_2\otimes b_3) \notag \\ \notag
   &\;= M((c_1g_2(0)+d_1g_2(1))\otimes(c_2g_2(0)+d_2g_2(1))\otimes(c_3g_2(0)+d_3g_2(1))) \\ \notag
   &\;= c_1c_2c_3M_0(0,0,0)
    + c_1c_2d_3M_0(0,0,1)
    + c_1d_2c_3M_0(0,1,0) \label{eqn:m-by-mo} \\ 
   &\quad +d_1c_2c_3M_0(1,0,0)
    + c_1d_2d_3M_0(0,1,1)
    + d_1c_2d_3M_0(1,0,1) \\ \notag
   &\quad +d_1d_2c_3M_0(1,1,0)
    +d_1d_2d_3M_0(1,1,1).
\end{align}
We will use this together with the expressions~\eqref{eq:M0params} in \myref{Cases}{case:1} and~\ref{case:2} and~\eqref{eq:M0params3} in \myref{Case}{case:3}
to get algebraic relations involving the parameters, which will lead to contradictions.
Once again, we will use Mathematica's \cite{Mat} Reduce function.
The details of the calculations that were carried out in Mathematica
can be found in the Mathematica Notebook~\cite{DG}.

Suppose we are in \myref{Case}{case:1}:
$r_{11}=r_{21}$ and $r_{12}\ne r_{22}$ and $q=r_{11}/(r_{11}+r_{22})$.
Since $0<q<1$ and either $q\ne 1/2$ or $r_{11}\ne r_{22}$, we must have
$r_{11}\ne 0$ and $r_{22}\ne r_{11}$.
We may without loss of generality  assume $r_{11}=1$, for replacing $a$ with $ta$ for any real number $t>0$ amounts to replacing $\alpha_{(1,2)}$ with $t^2\alpha_{(1,2)}$ and
and $\alpha_{(2,1)}$ with $t^2\alpha_{(2,1)}$.
With the assumptions $r_{11}=r_{21}=1$, $r_{12}\ne r_{22}$,
$q=r_{11}/(r_{11}+r_{22})$, and $r_{22}\ne 1$,
Mathematica's reduce function says that~\eqref{eqn:m-by-mo} with
$(n,m,k)=(3,1,3)$ implies that $r_{22}$ is negative or non-real.
This contradicts that $r_{22}$ must be positive.

Now suppose we are in \myref{Case}{case:2}:
$r_{11}\ne r_{21}$ and $r_{12}=r_{22}$ and $q=r_{11}/(r_{11}+r_{22})$.
Similarly to in \myref{Case}{case:1}, we may
assume $r_{22}=1$ and $r_{11}\ne r_{22}$.
By testing~\eqref{eqn:m-by-mo} again with $(n,m,k)=(3,1,3)$, we deduce
from the output of Mathematica's reduce function that $r_{11}$ is negative or non-real.
This contradicts that $r_{11}$ must be positive.

Finally, we consider \myref{Case}{case:3}:
$r_{11}=r_{21}$ and $r_{12}=r_{22}$, which
we split into four disjoint subcases:
\begin{enumerate}[leftmargin=2.5cm,label=Subcase III.\arabic*:,ref=III.\arabic*]
\item $r_{11}=r_{21}=r_{12}=r_{22}$
      \label{subcase:1}
\item $0=r_{11}=r_{21}\ne r_{12}=r_{22}$
      \label{subcase:2}
\item $r_{11}=r_{21}\ne r_{12}=r_{22}=0$
      \label{subcase:2'}
\item $0\ne r_{11}=r_{21}\ne r_{12}=r_{22}\ne0$.
      \label{subcase:3}
\end{enumerate}
We will use~\eqref{eqn:m-by-mo} multiple times with different triples $(n,m,k)$, now using the expression~\eqref{eq:M0params3}, of course.
Using again using Mathematica's
Reduce function, in each subcase we obtain a contradiction.

In \myref{Subcase}{subcase:1},
we cannot have all parameters $r_{ij}$ equal to $0$, and we cannot have $q=1/2$, for
either of these would contradict~\eqref{eqn:auto}.
After rescaling $a$, if necessary, we may assume $r_{11}=r_{12}=r_{21}=r_{22}=1$ and $q\ne 1/2$.
Using~\eqref{eqn:m-by-mo} with $(n,m,k)=(2,1,1)$
yields $m_{22}=1-m_{11}$.
After adding this assumption, and now using~\eqref{eqn:m-by-mo} with 
$(n,m,k)=(2,1,3)$, we obtain that either $q$ is non-real, negative, greater
than $1$, or equal to $1-m_{22}$.
Since only the latter is possible, we incorporate this identity and use~\eqref{eqn:m-by-mo}
with $(n,m,k)=(3,1,3)$.
Mathematica's Reduce function concludes that $q$ must be outside the interval $(0,1)$,
which is a contradiction.

In \myref{Subcase}{subcase:2}, after rescaling, we may assume $r_{12}=r_{22}=1$.
Using~\eqref{eqn:m-by-mo} with $(n,m,k)=(2,1,1)$, we find that $m_{22}=1-q$.
After adding this assumption, we use~\eqref{eqn:m-by-mo} using $(n,m,k)=(3,1,1)$ and
get $q=0$, which is a contradiction.

In \myref{Subcase}{subcase:2'}, after rescaling, we may assume $r_{11}=r_{21}=1$.
Using~\eqref{eqn:m-by-mo} with $(n,m,k)=(2,1,1)$, we conclude $m_{11}=q$.
After adding this assumption, we use~\eqref{eqn:m-by-mo} with $(n,m,k)=(3,1,1)$ 
and we get $q=1$, which is a contradiction.

Lastly, we investigate \myref{Subcase}{subcase:3}.
Without loss of generality, we set $r_{11}=r_{21}=1$.
Now the condition $qr_{12}\ne(1-q)r_{21}$ from~\eqref{eqn:auto} implies
$q\ne 1/(1+r_{22})$.
We use~\eqref{eqn:m-by-mo} with
$(n,m,k)=(2,1,1)$, yielding
\[
m_{11}=q+r_{22}^2-m_{22}r_{22}^2-qr_{22}^2.
\]
With this constraint, using~\eqref{eqn:m-by-mo} with $(n,m,k)=(1,1,3)$ yields
$m_{11}=q$ and $m_{22}=1-q$.
Adding in these equalities and using~\eqref{eqn:m-by-mo} with $(n,m,k)=(3,1,3)$
yields a list of possible values for $q$, all of which are outside the interval $(0,1)$.
This gives us our final contradiction.
\end{proof}

\section{Open Questions}
\label{sec:qns}

We finish with two open questions.
As before, assume $B$ is a W$^*$-algebra and $(A,E)$ is a $B$-valued W$^*$-noncommutative probability space.

\begin{ques}\label{qu:free}
Suppose a tracial $B$-valued circular element $a\in A$ has a tracial, free bipolar decomposition $(u,x)$ that is minimal and suppose $u$ is unitary.
Must $a$ also have a free polar decomposition, namely, if $a=v|a|$ is the polar decomposition
of $a$, must $v$ and $|a|$ be $*$-free with respect to $E$?
\end{ques}

Note that \myref{Example}{ex:even-no-polar} does not provide a counter-example.
In that case, from the polar decomposition $a=w|a|$, the elements $w$ and $|a|$ are $*$-free over $B$
with respect to the conditional expectation $E$.
It is just that $w$ does not normalize $B$.
Note also that, if the answer to \myref{Question}{qu:free} is ``yes,'' then under a nondegeneracy condition
(which fails in the case of \myref{Example}{ex:even-no-polar})
if $u$ normalizes $B$, then
the unitary $v$ must also normalize $B$, by
\myref{Theorem}{thm:degen}.

\begin{ques}\label{qu:normalize}
Suppose a tracial $B$-valued circular element $a\in A$  has a tracial, free bipolar decomposition $(u,x)$ with $u$ unitary.
Must it also have a free bipolar decomposition $(u',x')$ where $u'$ is a unitary that normalizes $B$?
\end{ques}

Of course, \myref{Theorem}{thm:main} answers this question affirmatively in the case $B=\CC^2$.
What about more generally?

\section{Declaration}

The authors have no relevant financial or non-financial interests to disclose.

\end{document}